\numberwithin{equation}{section}
\newtheorem{lemma}{Lemma}[section]
\newtheorem{theorem}[lemma]{Theorem}
\newtheorem*{theorem*}{Theorem}
\theoremstyle{definition}
\newtheorem{remark}{Remark}[section]
      \newcommand{\C}{{\mathbb C}}
\newcommand{\h}{{H}}
\newcommand{\eps}{\epsilon}
\newcommand{\pl }{\,}
\newcommand{\lel }{\, =\, }
\newcommand{\kl}{\, \le \, }
\newcommand{\ten}{\otimes}
\newcommand{\uno}{1\!\!1}
\title[On a question of\\ Blecher, Pisier, Shlyakhtenko]{On a question of\\ Blecher, Pisier, Shlyakhtenko}
\author{Roy Araiza} 
\address{Department of Mathematics \& Illinois Quantum Information Science and Technology Center\\University of Illinois at Urbana-Champaign\\1409 W. Green St. Urbana, IL 61891. USA}
\email{raraiza@illinois.edu}
\author{Marius Junge}
\address{Department of Mathematics \& Illinois Quantum Information Science and Technology Center\\University of Illinois at Urbana-Champaign\\1409 W. Green St. Urbana, IL 61891. USA}
\email{junge@math.uiuc.edu}
\author{Carlos Palazuelos}
\address{Instituto de Ciencias Matem\'aticas (ICMAT)\\Departamento de An\'alisis Matem\'atico y Matem\'atica Aplicada \\
Facultad de Ciencias Matem\'aticas \\ Universidad Complutense de Madrid \\
Madrid 28040. Spain}
\email{cpalazue@ucm.es}
\thanks{MJ is partially supported by the NSF grants DMS 2247114 and Raise-TAG183917. C.P. is partially supported by the MICINN project PID2020-113523GB-I00, by QUITEMAD+-CM, P2018/TCS4342, funded by Comunidad de Madrid and by Grant CEX2019-000904-S funded by MCINN/AEI/ 10.13039/501100011033. C.P. gratefully acknowledges financial support for this publication by the Fulbright Program, which is sponsored by the U.S. Department of State, the U.S.- Spain Fulbright Commission.}
\begin{document}

\keywords{Grothendieck's Theorem, Operator spaces}

\maketitle

\begin{abstract}
We show the failure of a matricial version of Grothendieck's theorem for operator spaces, thereby resolving a long-standing open question in the field. Moreover, by showing that such a counterexample can occur in the simplest context of commutative $C^*$-algebras, we address some other open questions in operator algebras. Our constructions, completely explicit and fairly simple, are inspired by some techniques in quantum information theory.
\end{abstract}

\section{Introduction and main results}

Grothendieck's contributions to functional analysis \cite{Grothendieck}, despite initially overlooked, transformed Banach space theory and gave rise to the Grothendieck program on operator algebras \cite{Pi12}. Moreover, Grothendieck's ``fundamental theorem'' has applications today that span a wide range of fields, including computer science and quantum information theory \cite{Kh12, Ts93}.

In his work, Grothendieck employed a functorial approach to study the smallest $\ten_{\eps}$ and largest $\ten_{\pi}$ norms on the tensor product of two Banach spaces. His fundamental theorem can be formulated as follows:
\begin{equation}
\label{GRO} 	
 \ell_1\ten_{\eps} \ell_1 \cong \ell_1 \ten_{\gamma_2^*}\ell_1 \quad \mbox{isomorphically}  \pl, 
  \end{equation} 
where  $\gamma_2^*$  is dual norm of the Hilbert space factorization norm $\gamma_2$. This theorem highlights a remarkable property of tensor products of Banach spaces, revealing deep connections between different norms and structures (see  \cite{De93} and the references therein). In particular, Grothendieck's theorem implies that every linear map $T:\ell_\infty\rightarrow \ell_1$ factors through a Hilbert space.

In 1974, Pisier \cite{Pisier78} provided a positive answer to one of Grothendieck's conjectures by showing that 
 \begin{equation}\label{PH} A^*\ten_{\eps}B^*\cong A^*\ten_{\tilde{\gamma}^*_2} B^* \quad \mbox{isomorphically}
 	\end{equation}holds for general $C^*$-algebras $A$ and $B$. Here, $\tilde{\gamma}^*_2$ is the analog to the $\gamma_2^*$ norm in the context of operator algebras. That is, Pisier replaced commutative $L_1$-spaces by noncommutative ones. In particular, Pisier proved that every linear map $T:A\rightarrow B^*$ which is approximable, in a particular way, by finite ranks maps, factors through a Hilbert space. This result was later extended by Haagerup \cite{Haagerup85}, who removed such an approximability assumption.

The category of operator spaces, which are closed subspaces of $C^*$-algebras endowed with an induced matricial structure, is closed under taking dual spaces,  closed subspaces and quotients. In particular, the dual of a $C^*$-algebra has a natural operator space structure. Moreover, a theory of tensor products can be naturally developed in this category \cite{BlPa91}, allowing for analogues of Grothendieck's smallest and largest tensor norms. It turns out that for operator spaces $X\subset \mathcal B(\h)$ and $Y\subset \mathcal B(K)$, the smallest norm \[ X\ten_{\min}Y \subset \mathcal B(H\ten_2 K) \] matches exactly the construction for tensor products of $C^*$-algebras. On the other hand, Grothendieck's Hilbert space factorization norm is usually replaced by the so-called Haagerup tensor product 
 \[ \|\xi\|_{X\ten_h Y} 
 \lel \inf_{\xi=\sum_{k=1}^m x_k\ten y_k}
 \left\|(x_1,\cdots ,x_m)\right\| 
\left \|\left ( \begin{array}{c}y_1\\ \vdots \\ y_m \end{array} \right) \right \| \pl .\]

The Haagerup tensor product $X\ten_h Y$ comes with a natural operator space structure and appears to be the appropriate tool to refine \eqref{PH} into a form more closely resembling (\ref{GRO}). Indeed, in 1991 Blecher conjectured a fully functorial operator space version of Grothendieck's theorem: for general $C^*$-algebras $A$ and $B$, it holds 
 \begin{equation}\label{Blecher's conj}
   A^*\ten_{\min} B^* \pl \stackrel{?}{\cong} A^*\ten_{\mu} B^*
   \quad \mbox{completely isomorphically} \pl,
   \end{equation}where $A^*\ten_{\mu} B^*= A^*\ten_h B^* + B^*\ten_h A^* $ may be considered as a quotient of the direct sum $A^*\ten_h B^*\oplus_1 B^*\ten_h A^*$ with respect to the map $q(a_1 \ten b_1\oplus b_2\ten a_2)=a_1\ten b_1+a_2\ten b_2$.  
   
 A breakthrough in this problem was achieved by Pisier and Shlyakhtenko in \cite{PiSh02}, where they
conclusively demonstrated that the answer to the preceding question is indeed positive at the Banach space level. They proved 
 \begin{equation}\label{PisS}
 A^*\ten_{\min} B^* \cong A^*\ten_{\mu} B^*  \quad \mbox{isomorphically} \pl,
 \end{equation}for general $C^*$-algebras $A$ and $B$. Notably, their results extended beyond the setting of $C^*$-algebras to encompass general exact operator spaces. Subsequently, Haagerup and Musat in \cite{HaMu08} furthered the understanding of (\ref{PisS}),  particularly improving the constant involved. However, the broader question concerning the operator space structure of the corresponding tensor products in Conjecture (\ref{Blecher's conj}) remained unresolved in \cite{PiSh02} (see also \cite[Problem 21.2]{Pi12}).

The first result presented in this work provides a negative answer to Blecher's conjecture. Moreover, to this end, we do not consider the whole matricial structure of the tensor product, but only a column structure. If $ \mathcal{K}(\ell_2)$ denotes the space of compact operators on $\ell_2$ and $C$ is the column operator space, we prove:
\begin{theorem}\label{thm1}
 \begin{equation*}
C\ten_{\min} (\mathcal{K}(\ell_2)^*\ten_{\min} \mathcal{K}(\ell_2)^*) \neq C\ten_{\min} (\mathcal{K}(\ell_2)^*\ten_{\mu} \mathcal{K}(\ell_2)^*)  \quad \text{isomorphically} \pl.
\end{equation*}
\end{theorem}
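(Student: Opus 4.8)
The plan is to disprove the isomorphism by producing, for every constant, a finite column of tensors on which the two norms differ by more than that constant; since the functor $C\ten_{\min}(-)$ is monotone for the complete contraction $S_1\ten_\mu S_1\to S_1\ten_{\min}S_1$, it suffices to exhibit an unbounded ratio. First I would reduce to finite dimensions, replacing $\mathcal K(\ell_2)$ by $M_d$ (so $\mathcal K(\ell_2)^*$ by the trace class $S_1^d$) and $C$ by $C_n$. For a column $z=\sum_{i=1}^n e_{i1}\ten t_i$ with $t_i\in S_1^d\ten S_1^d$, and any completely isometric realization $W\subseteq\mathcal B(H)$ of the relevant operator space $W$, one has the clean formula $\|z\|_{C_n\ten_{\min}W}=\big\|\sum_{i=1}^n t_i^*t_i\big\|_{\mathcal B(H)}^{1/2}$. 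Thus both target norms are the \emph{same} quadratic expression, evaluated in two different realizations of $S_1^d\ten S_1^d$: the minimal (spatial) one for $\ten_{\min}$ and the more rigid Haagerup-built one for $\ten_\mu$. By Pisier--Shlyakhtenko \eqref{PisS} these realizations are Banach-equivalent uniformly in $d$, so any gap must be created by the column coupling; the whole task is to show the coupling is felt by the two realizations very differently.

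Next I would make the two realizations explicit through maps and factorizations. Using $S_1^d=C_d\ten_h R_d$ and $M_d=R_d\ten_h C_d$ (completely isometrically, with $R_d$ the row space), identify $S_1^d\ten_{\min}S_1^d=CB(M_d,S_1^d)$; then the $\min$-column norm is a supremum of $\big\|\sum_i\Phi(t_i)^*\Phi(t_i)\big\|^{1/2}$ over complete contractions $\Phi$ out of this minimal tensor product. For $\ten_\mu$ (the sum $\ten_h+\ten_{h^{\mathrm{op}}}$ of Haagerup products in both orders), the Christensen--Sinclair/Blecher--Paulsen factorization forces every complete contraction to carry a two-sided multiplicative structure in \emph{both} orders; this extra rigidity is exactly what makes the $\mu$-column norm potentially much larger. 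On the dual side, the operator space dual of $S_1^d\ten_\mu S_1^d$ is the intersection $(M_d\ten_h M_d)\cap(M_d\ten_{h^{\mathrm{op}}}M_d)$ with the norm $\max(\|\cdot\|_h,\|\cdot\|_{h^{\mathrm{op}}})$, which I would use to certify lower bounds.

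With this in hand I would choose an explicit, symmetric test column — for instance, indexing columns by pairs and setting
\[
z=\sum_{i,j=1}^{d}e_{(i,j),1}\ten\big(e_{ij}\ten e_{ij}\big)\in C_{d^2}\ten\big(S_1^d\ten S_1^d\big),
\]
with $e_{ij}$ the matrix units (a Weyl--unitary basis variant works equally well and is occasionally cleaner). The upper bound for the $\min$-column norm is then a direct operator-norm computation of $\sum_{i,j}t_{ij}^*t_{ij}$ in the spatial realization of $C_d\ten_h R_d\ten_{\min}C_d\ten_h R_d$, which I expect to grow only mildly in $d$. The lower bound for the $\mu$-column norm I would obtain by duality: exhibit an explicit functional $\Psi$ with $\|\Psi\|\le1$ in the dual of $C_{d^2}\ten_{\min}(S_1^d\ten_\mu S_1^d)$, built from the intersection-Haagerup description above, such that $\langle\Psi,z\rangle$ grows strictly faster; the ratio then tends to $\infty$ and the theorem follows.

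The hard part is this last lower bound. Because $\ten_\mu$ is an infimum over decompositions (equivalently, its complete contractions are constrained by a two-sided factorization), lower-bounding the $\mu$-column norm means certifying that \emph{no} Haagerup factorization is compatible with the column coupling. Concretely, the obstacle is to control the $\max(\|\cdot\|_h,\|\cdot\|_{h^{\mathrm{op}}})$ norm of the dual certificate $\Psi$ — keeping it bounded while its pairing with $z$ diverges — and it is precisely here that the quantum-information-inspired construction enters, in the guise of an unbounded Bell-violation-type estimate. This step is also where the column amplification is used essentially: without it, Pisier--Shlyakhtenko \eqref{PisS} guarantees the two norms agree up to a universal constant, so the certificate must exploit the interaction between the column and the rigid Haagerup structure, not either factor alone.
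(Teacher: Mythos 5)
Your strategic frame is essentially the paper's own: reduce to finite dimensions, amplify by a column, and certify the $\mu$-side lower bound by duality against a functional bounded in the maximum of the two Haagerup-order dual norms (this is exactly the paper's $P_n$, with $\max\{\|P_n\|_{R_{n^4}\hat{\otimes}(S_\infty^n\otimes_h S_\infty^n)},\|P_n^T\|_{R_{n^4}\hat{\otimes}(S_\infty^n\otimes_h S_\infty^n)}\}\le 1$). But as written the proposal proves neither decisive estimate, and it mislocates the difficulty. You defer the $\mu$-column lower bound as ``the hard part,'' to be supplied by an unspecified ``unbounded Bell-violation-type estimate''; in the paper that step is the \emph{elementary} one: for any coefficient function supported on graphs of bijections, pure row/column bookkeeping (Lemmas \ref{lemma_haag_P_1} and \ref{lemma_haag_P_2}) gives the exact value $\|\xi_f\|_{R_{n^4}\otimes_h(S_\infty^n\otimes_h S_\infty^n)}=n^2$ in Lemma \ref{lemma_basic_2}, and the pairing bound follows at once. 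The genuinely quantum-information-flavored step is the one you expect to be routine, namely the \emph{upper} bound on the column norm: the paper needs the CHSH$_n$ element $\eta_n=\sum_{a+b=xy}e_{xyab,1}\otimes e_{a,x}\otimes e_{b,y}$ with $n$ \emph{prime}, so that after extracting sub-POVMs $E_x^a=T(e_{a,x})^\dagger T(e_{a,x})$ from complete contractions (Lemma \ref{lemma_basic_1}), the Fourier twist $\omega^{k(a+b-xy)}$ with $\|(\omega^{-kxy})_{x,y}\|=\sqrt{n}$ for $k\neq 0$ yields $\|\eta_n\|_{C_{n^4}\otimes_{\min}(S_1^n\otimes_{\max}S_1^n)}\le\sqrt{2}\,n^{3/4}$ against the $\mu$-column lower bound $n$ (Lemma \ref{lemma_upper_bound_eta_n} and Theorem \ref{Thm_Main_I}). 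Without executing both estimates, the proposal has no quantitative content: it names the theorem's difficulty rather than resolving it.

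That said, your diagonal candidate $z=\sum_{i,j}e_{(i,j),1}\otimes e_{ij}\otimes e_{ij}$ is in fact viable for the $\min$-versus-$\mu$ statement, and both computations you left open can be done. Upper bound: if $u,v:S_1^d\to\mathcal B(H)$ are complete contractions, then $\hat{u}=\sum_{ij}e_{ij}\otimes u(e_{ij})$ is a contraction, so $\sum_i u(e_{ij})^\dagger u(e_{ij})\le\uno$ for each $j$, whence $\sum_{ij}u(e_{ij})^\dagger u(e_{ij})\otimes v(e_{ij})^\dagger v(e_{ij})\le d\,\uno$ and $\|z\|_{C_{d^2}\otimes_{\min}(S_1^d\otimes_{\min}S_1^d)}\le\sqrt{d}$. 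Lower bound: take $Q=\frac{1}{d}\sum_{ij}e_{1,(ij)}\otimes e_{ij}\otimes e_{ij}$ and factor across the middle leg with row entries $e_{(ij)}\otimes e_{ij}$ and column entries $e_{ij}$; since $\sum_{ij}(e_{(ij)}\otimes e_{ij})(e_{(ij)}\otimes e_{ij})^\dagger=d\,\uno$ and $\sum_{ij}e_{ij}^\dagger e_{ij}=d\,\uno$, one gets $\|Q\|\le 1$ in $R_{d^2}\hat{\otimes}(M_d\otimes_h M_d)$ and, by the symmetry of $Q$ in its two legs, also in the opposite Haagerup order, while $\langle z,Q\rangle=d$; the decomposition argument of the paper's proof of Theorem \ref{Thm_Main_I} then gives $\mu$-column norm at least $d$, hence an unbounded ratio $\sqrt{d}$. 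Two caveats remain. First, this route only yields Theorem \ref{thm1} with $\otimes_{\min}$; it says nothing about the strictly stronger statement with $S_1\otimes_{\max}S_1$ that the paper actually proves (and which feeds the reduction to Theorem \ref{thm2}), and there is no reason to expect your diagonal element to have small $\max$-column norm --- that is precisely what the CHSH$_n$ structure buys. Second, your identification ``$S_1^d=C_d\otimes_h R_d$ and $M_d=R_d\otimes_h C_d$'' is backwards: from $S_\infty\otimes_{\min}X=C\otimes_h X\otimes_h R$ one has $M_d=C_d\otimes_h R_d$ and $S_1^d=R_d\otimes_h C_d$, and with the reversed convention the concrete Haagerup computations above would come out wrong.
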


In fact, we will demonstrate an even stronger result than Theorem 1.1 by proving the same statement with the operator space $\mathcal{K}(\ell_2)^*\ten_{\min} \mathcal{K}(\ell_2)^*$ replaced by $\mathcal{K}(\ell_2)^*\ten_{\max} \mathcal{K}(\ell_2)^*$. This allows us to link the previous conjecture on Grothendieck's theorem with some fundamental questions in operator algebras. To explain the scope of this and the following results, given an operator space $X$, let us denote by $OA(X)$ its unital universal operator algebra and by $C^*\langle X\rangle$  its unital universal $C^*$-algebra, respectively (see Section \ref{sec: preliminaries} for details). Then, it was proved in \cite{Oi99} that 
\[ X\ten_{\mu}Y \subset OA(X)\otimes_{max} OA(Y) \quad \mbox{completely isometrically},\]while one defines the max norm so that  
\[ X\ten_{max}Y \subset C^*\langle X\rangle\otimes_{max} C^*\langle Y\rangle \quad \mbox{completely isometrically}.\] 

Hence, the analogous statement to Theorem \ref{thm1}, when the min is replaced with the max norm, can be restated in terms of the corresponding subspaces of the previous algebras when $X=Y=\mathcal{K}(\ell_2)^*$ is the space of trace class operators.

The previous viewpoint strongly motivates considering Conjecture \ref{Blecher's conj} when focusing on commutative $C^*$-algebras; specifically, when  $X=Y=\ell_1$. This problem, already considered in \cite{PiSh02} (see also \cite[Chapter 25]{Pisierbook}), highlights the equivalence, at the matrix level, to the context originally considered by Grothendieck, since the equivalence $$ \ell_1\otimes_{\mu}\ell_1 \cong \ell_1\otimes_{max}\ell_1\quad \mbox{isomorphically}  \pl,$$ follows from the classical Grothendieck's Theorem stated in (\ref{GRO}). Our second result, the main result of this work, states that this is not longer true in the category of operator spaces:
 \begin{theorem}\label{thm2} 
  \begin{equation*}
  \mathcal{K}(\ell_2) \ten_{\min} (\ell_1 \ten_{max} \ell_1)
 \neq 	\mathcal{K}(\ell_2) \ten_{\min} (\ell_1 \ten_{\mu} \ell_1) \quad \text{isomorphically} \pl.
  \end{equation*}
 \end{theorem}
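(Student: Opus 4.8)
The plan is to show that the two operator space structures that $\otimes_{\max}$ and $\otimes_{\mu}$ induce on the single Banach space $\ell_1\otimes\ell_1$ are not completely isomorphic, and to certify this by one growing family of matrix-level witnesses. First I would record the reduction. Since $\mathcal K(\ell_2)\otimes_{\min}Z$ depends only on the operator space structure of $Z$ and contains each corner $M_N\otimes_{\min}Z=M_N(Z)$ completely isometrically, the two spaces in the statement are completely isomorphic if and only if $\|\id:M_N(\ell_1\otimes_{\max}\ell_1)\to M_N(\ell_1\otimes_{\mu}\ell_1)\|$ and the norm of its inverse are bounded uniformly in $N$. Thus it suffices to produce tensors $u_N\in M_N(\ell_1^{\,n}\otimes\ell_1^{\,n})$, for a suitable $n=n(N)$, whose ratio of norms in the two structures tends to infinity. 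I would stress at the outset that at the scalar level $N=1$ no such gap can exist: classical Grothendieck's theorem \eqref{GRO} gives $\ell_1\otimes_{\max}\ell_1\cong\ell_1\otimes_{\mu}\ell_1$ up to the constant $K_G$, so any separation must be an intrinsically matricial effect.

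Next I would make both norms operator-theoretically explicit, using the descriptions recalled in the introduction. Because $\ell_1$ carries the structure dual to $\min(c_0)$, the universal algebras $C^*\langle\ell_1\rangle$ and $OA(\ell_1)$ are generated by free contractions. Unwinding $\ell_1\otimes_{\max}\ell_1\subset C^*\langle\ell_1\rangle\otimes_{\max}C^*\langle\ell_1\rangle$ and $\ell_1\otimes_{\mu}\ell_1\subset OA(\ell_1)\otimes_{\max}OA(\ell_1)$, any tensor $u=\sum_{i,j}c_{ij}\otimes e_i\otimes e_j$ with $c_{ij}\in M_N$ satisfies
\[
\|u\|_{M_N(\ell_1\otimes\ell_1)}=\sup\Big\|\sum_{i,j}c_{ij}\otimes A_iB_j\Big\|,
\]
where for the $\max$–norm the contractions $A_i,B_j$ range over families whose generated $C^*$-algebras commute (so $[A_i,B_j]=[A_i,B_j^{*}]=0$), whereas for the $\mu$–norm only the generated operator algebras commute (so merely $[A_i,B_j]=0$, with the adjoints free). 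In particular $\|\cdot\|_{\min}\le\|\cdot\|_{\max}\le\|\cdot\|_{\mu}$, the map $\id:\ell_1\otimes_{\mu}\ell_1\to\ell_1\otimes_{\max}\ell_1$ is completely contractive, and only its inverse can fail to be bounded after tensoring with $\mathcal K(\ell_2)$. The target therefore sharpens to: find $u_N$ whose $\mu$–norm (one-sided commutation) diverges while its $\max$–norm (full bicommutation) stays of strictly smaller order.

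Then comes the construction, which I would base on the elementary fact that there exist contractions with $[A_i,B_j]=0$ but $[A_i,B_j^{*}]\neq0$ — nilpotent, shift-, or creation-type operators on a small or Fock model (already $A=B=e_{12}$ commute but do not $*$-commute). The guiding paradigm is the triangular-truncation phenomenon: a one-sided (ordered) amplification whose cb-norm grows while its symmetrization is tame. Concretely I would take the coefficients $c_{ij}$ to be an explicit pattern reminiscent of a triangular truncation (matrix units together with a fixed sign or unitary design) and supply, on the $\mu$–side, explicit commuting-not-$*$-commuting contractions so that $\sum_{i,j}c_{ij}\otimes A_iB_j$ has norm of order $n^{\alpha}$ (or at least $\log n$) — this is exactly the Haagerup-tensor mechanism underlying $\otimes_{\mu}$ at work. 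For the $\max$–side, the extra relations $[A_i,B_j^{*}]=0$ force the bicommuting (``symmetric'') regime, precisely the one made tractable by the positive operator-space Grothendieck theorems of Pisier–Shlyakhtenko \cite{PiSh02} and Haagerup–Musat \cite{HaMu08}; here I would give a direct structural estimate, tailored to the chosen pattern, showing $\|u_N\|_{\max}$ remains uniformly bounded. Combining the two estimates produces the divergent ratio and hence Theorem~\ref{thm2}.

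The main obstacle is the simultaneous two-sided estimate on a single family $u_N$. The gap is genuinely matricial — it must collapse at $N=1$ by Grothendieck — so the witnesses have to be engineered so that relaxing $*$-commutation to plain commutation yields an unbounded amplification while the bicommuting value stays tame. The delicate point is not merely bounding but \emph{computing} the $\mu$–side lower bound: one must exhibit honest commuting-but-not-$*$-commuting contractions realizing the growth, and then match this against a clean, dimension-tracking upper bound for the $\max$–norm. Getting these two explicit, quantum-information-inspired choices to cooperate, with constants sharp enough that their quotient provably diverges, is where the real work lies.
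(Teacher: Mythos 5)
Your framing is sound as far as it goes: the reduction to uniform boundedness of the identity on the corners $M_N(\ell_1\otimes_{\max}\ell_1)\to M_N(\ell_1\otimes_{\mu}\ell_1)$, the descriptions of the two norms via $C^*\langle \ell_1\rangle$ and $OA(\ell_1)$ ($*$-commutation versus plain commutation, so that $\mathrm{id}:\ell_1\otimes_\mu\ell_1\to\ell_1\otimes_{\max}\ell_1$ is completely contractive), and the remark that any gap must collapse at $N=1$ by Grothendieck all agree with the paper. But past that point you have written a plan, not a proof: the witnesses $u_N$, the commuting contractions realizing the $\mu$-side growth, and the $\max$-side estimate are all deferred, and these are precisely the content of the theorem. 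Moreover, the paper's route to each of them is structurally different from what you sketch. It does not work directly in $\ell_1\otimes\ell_1$ with a triangular-truncation pattern: it first proves the separation for $S_1^n\otimes S_1^n$ (Theorem \ref{Thm_Main_I}), using the CHSH$_n$-type element $\eta_n$ of (\ref{def_eta}) supported on the variety $a+b=xy$ over $\mathbb Z_n$, $n$ prime, and then \emph{transfers} it to $\ell_1^{n^2}\otimes\ell_1^{n^2}$ by the teleportation-type complete contractions $J$ and $W$ of \cite{Ju16} (Lemma \ref{lem_telep}, Lemma \ref{lemma: lower bound theorem 2}). Nothing in your outline plays the role of this transfer, and no commutative candidate with provable two-sided estimates is exhibited.

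Two specific steps in your plan would fail as stated. First, the $\mu$-side lower bound: exhibiting explicit commuting-but-not-$*$-commuting contractions with diverging matrix norms is legitimate in principle (via $\ell_1\otimes_\mu\ell_1\subset OA(\ell_1)\otimes_{\max}OA(\ell_1)$), but it is exactly the intractable part; the paper avoids it entirely and instead lower-bounds the $\mu$ norm by duality against the definition (\ref{Def_mu_norm}), pairing $\eta_n$ with an explicit functional $P_n$ (and $\beta_n$ with its teleported version $Q_n$) whose symmetrized Haagerup norms are \emph{computed exactly} using Lemmas \ref{lemma_haag_P_1}, \ref{lemma_haag_P_2} and \ref{lemma_basic_2}. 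Second, the $\max$-side upper bound: you invoke Pisier--Shlyakhtenko \cite{PiSh02} and Haagerup--Musat \cite{HaMu08} to keep $\|u_N\|_{\max}$ uniformly bounded, but those are Banach-space-level (isomorphic) theorems and give no control whatsoever on the matricial norms $\|\cdot\|_{M_N(\ell_1\otimes_{\max}\ell_1)}$ that you must estimate -- indeed, a uniform matricial bound of this kind is essentially the statement being disproved, so the appeal is circular. The paper's witness in fact has $\max$ norm growing like $\sqrt2\, n^{3/4}$ (Lemma \ref{lemma_upper_bound_eta_n}, proved by the positivity trick $E_x^a=T(e_{a,x})^\dag T(e_{a,x})$ and a Fourier-analytic estimate with the character matrices $\Phi_k$, following \cite{Ba15}); only its ratio against the $\mu$ norm $\geq n$ diverges, at rate $n^{1/4}$. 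So the shape ``$\max$ uniformly bounded, $\mu$ divergent'' is unsupported, and without an explicit construction and the two matching quantitative estimates the proposal does not constitute a proof.
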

 
Since it is well known that $C^*\langle \ell_1\rangle=C^*(\mathbb F_{\infty})$, one can alternatively express Theorem \ref{thm2} by indicating that the natural inclusion $OA(\ell_1)\otimes_{max} OA(\ell_1)\subseteq C^*(\mathbb F_{\infty})\otimes_{max} C^*(\mathbb F_{\infty})$ is not completely isomorphic, even when restricted to the linear subspace $\ell_1 \ten \ell_1$. According to the precise definitions of $OA(\ell_1)$ and $C^*\langle \ell_1\rangle$, Theorem \ref{thm2} underscores a significant limitation involving commuting contractions and commuting  unitaries, or even commuting normal contractions (see Remark \ref{rem_commuting}).
 
The approach followed in this work delves into the deep relationship between the theories of operator spaces and nonlocal games \cite{Pa16}. On one hand, the proof of Theorem \ref{thm1} is inspired by the CHSH$_n$ games \cite{Ba15}, an extension of the renowned CHSH inequality, which holds paramount significance in quantum theory. On the other hand, the proof of Theorem \ref{thm2} utilizes certain embeddings between noncommutative $L_p$-spaces proved by the second and third authors in \cite{Ju16}, which are motivated by the quantum teleportation protocol in quantum computation. The combination of these techniques, along with some operator space computations, lead to completely explicit and (arguably) simple elements to demonstrate the previous theorems.

\section{preliminaries}\label{sec: preliminaries}

\subsection{Basic on operator spaces and operator algebras}

In this section we will review some of the necessary basics of operator spaces and their tensor products which we will invoke throughout the manuscript.  We refer to \cite{EffrosRuanBook, Pisierbook} for comprehensive references on the topic.

An \textit{operator space} is a closed subspace $X \subset \mathcal B(H)$, where $\mathcal B(H)$
denotes the spaces of bounded operators acting on a complex Hilbert space $H$. For any such subspace one naturally obtains a sequence of matrix norms $\|\cdot\|_n$ on $M_n(X)$ via the inclusion $M_n(X) \subseteq M_n(\mathcal B(H))\simeq \mathcal B(H^{\oplus n})$. Ruan's Theorem \cite{Ru88} characterizes the sequences of norms that can be achieved in this manner, offering an alternative definition of an operator space as a complex Banach space $X$ equipped with a sequence of matrix norms $(M_n(X),\|\cdot\|_n)$ that satisfy specific conditions. In both scenarios, we say that  $X$ is endowed with an \textit{operator space structure}.

When considering operator spaces, the norm on linear operators must reflect the matrix structure defined by the operator space structure. For a linear map between operator spaces $T:X\rightarrow Y$, we say that it is  \textit{completely bounded} if $\|T\|_{cb}:=\sup_n\|\text{Id}\otimes T:M_n(X)\rightarrow M_n(Y)\|$ is finite. We will say that $T$ is  \textit{completely contractive} if $\|T\|_{cb}\leq 1$. Moreover, $T$ is termed a  \textit{complete isomorphism} (resp.  \textit{complete isometry}) if it is an isomorphism with $T$ and $T^{-1}$ completely bounded (resp. completely contractive).

Given an operator space $X$, then $X^*$ has a privileged operator space structure with norms on $M_n(X^*)$ given via the identification \[M_n(X^*)  =  \text{CB}(X, M_n),\] where the latter denotes the operator space of completely bounded maps $T: X \to M_n$ endowed with the norm $\|\cdot\|_{cb}$.  Every $C^*$-algebra $A$ has a natural operator space structure. Therefore, by duality, $\ell_1$ and the space of trace class operators $S_1$ also have a natural operator space structure, as they are the duals of $c_0$ and the space of compact operators $\mathcal K(\ell_2)$, respectively. Moreover, the natural operator spaces structure on the space of bounded operators acting on $\ell_2$, $S_\infty$, matches the one given by the duality $S_1^*=S_\infty$.

Important examples which we use throughout this manuscript are the  \textit{column}  and  \textit{row} Hilbert operator spaces. Given a Hilbert space $H$, the column operator space $H_c$ is defined by means of the natural identification $$H\cong \mathcal B(\C, H).$$ For the particular case $H=\ell_2^n$, we will simply denote $C_n$. By duality, we can define the row operator space $H_r=(H_c)^*$, where we recall that, given a Hilbert space $H$, its dual space $H^*$ is naturally identified with the conjugate $\overline{H}$.   As in the previous case, the finite dimensional row space will be simply denoted by $R_n$.

An abstract theory for operator space tensor products was first developed by Blecher and Paulsen in \cite{BlPa91} in which they introduce the \textit{projective} and  \textit{injective} operator space tensor products. Given two operator spaces $X, Y$, and $w \in M_n(X \otimes Y)$,  define the \textit{operator space projective norm} of $w$ as \begin{align*}
\| w\|_{\wedge}:= \inf\{\|\alpha\| \|x\| \|y\| \|\beta \|\},
\end{align*}where the infimum is taken over $p,q,n\in \mathbb N$, $x \in M_p(X)$, $y \in M_q(Y)$, $\alpha \in M_{n,pq}$, $\beta \in M_{pq,n}$, such that $w = \alpha(x \otimes y)\beta$. We denote by $X \hat{\otimes} Y$ the corresponding operator space. As one may expect from its name, the projective tensor norm preserves  (complete) quotients. One may also consider this operator space as the ``predual'' of the operator space $\text{CB}(X, Y^*)$. In particular, one has the complete isometry $(X \hat{\otimes} Y)^* = \text{CB}(X, Y^*)$. As previously stated, the operator space \textit{injective (minimal)} tensor product is induced via the embedding into the minimal C*-algebra tensor product. Thus, if $X \subset \mathcal B(H), Y \subset \mathcal B(K)$, then we have the completely isometric embedding $X \otimes_{min} Y \subset \mathcal B(H \otimes_2 K)$. In particular, it follows that the min norm is preserved by (complete) isometries. It can be seen that, given $w \in M_n(X \otimes Y)$, one has $$\|w\|_{M_n(X\otimes_{min} Y)}=\sup \Big\|\big(\text{Id}\otimes T\otimes S\big)(w)\Big\|_{M_n(\mathcal B(H\otimes K))},$$where the supremum is taken over all Hilbert spaces $H$ and $K$ and all complete contractions $T:X\rightarrow \mathcal B(H)$ and $S:Y\rightarrow \mathcal B(K)$. In comparison to the projective tensor product, we have the completely isometric embedding $X\otimes_{min}Y\subseteq CB(X^*,Y).$ Hence, we see that the projective and the injective norms are dual to each other and for finite-dimensional operator spaces  $X$, $Y$, there are the following completely isometric identifications $$(X \hat{\otimes} Y)^*=X^* \otimes_{min} Y^*, \hspace{0.4 cm} (X \otimes_{min} Y)^*=X^*  \hat{\otimes}Y^*.$$

Given operator spaces $X \subset \mathcal B(H)$ and $Y \subset \mathcal B(K)$, we define the \textit{Haagerup operator space norm} for $w \in M_n(X \otimes Y)$  as \[
\|w\|_h:= \inf \{ \|u\|_{M_{n,r}(X)} \|v\|_{M_{r,n}(Y)}\},
\] where the infimum runs over $r\in \mathbb N$ and $u$, $v$, such that $w_{i,j}=\sum_{k=1}^r u_{i, k}\otimes v_{k,j}$ for every $i,j$. We denote by $X \otimes_h Y$ the corresponding operator space, which is usually called the \textit{Haagerup operator space tensor product}. Despite being injective, projective and self-dual, it has been long known that $\otimes_h$ is not commutative. 

It is well known that the previous norms behave nicely when considering tensor products of linear maps. Specifically, if 
$\alpha$ denotes any of the previous three norms and $T_i:X_i\rightarrow Y_i$ are completely bounded maps for $i=1,2$, then the map $$T_1\otimes T_2:X_1\otimes_\alpha X_2\rightarrow Y_1\otimes_\alpha Y_2$$is completely bounded and $$\|T_1\otimes T_2\|_{cb}=\|T_1\|_{cb}\|T_2\|_{cb}.$$This is usually refer to as the \textit{metric mapping property} in the category of operator spaces. Also, by iterating the process, one can define the aforementioned norms on the tensor product of more than two spaces. Remarkably, all three norms exhibit associativity. Furthermore, the previous properties (related to projectivity, injectivity,  duality, etc.) remain consistent and apply in the same manner.

The previously discussed tensor norms are closely related when examining the column and row spaces. In fact, it can be proven that for every operator space $X$, the equalities
\begin{align}
C\otimes_{min} X=C\otimes_h X\hspace{0.3 cm}\text{and} \hspace{0.3 cm}X\otimes_{min} R=X\otimes_h R
\end{align}hold completely isometrically. By duality, one also has $R\hat{\otimes} X=R\otimes_h X$ and $X\hat{\otimes}C=X\otimes_h C$ completely isometrically.

The previous identifications, along with the aforementioned properties of the tensor norms we are considering, easily imply that the following identities are complete isometries:
\begin{align}\label{Identities_S_infty_S_1}
S_\infty\otimes_{min} X = C \otimes_h X \otimes_h R, \hspace{0.3 cm}S_1 \hat{\otimes} X = R \otimes_h X \otimes_h C.
\end{align}

Equation (\ref{Identities_S_infty_S_1}) and the well-known estimate $\|\text{Id}:R_n\rightarrow C_n\|_{cb}=\|\text{Id}:C_n\rightarrow R_n\|_{cb}=\sqrt{n}$, can be used to show that for every operator space $X$, it holds: 
\begin{align}\label{Id_com_int}
\Big\|\frac{\text{Id}}{n}:S_\infty^n\otimes_{min} X\rightarrow S_1^n\hat{\otimes }X\Big\|_{cb}=1.
\end{align}

As emphasized in the introduction, the non-commutativity of the Haagerup tensor norm leads one to consider the so-called \textit{symmetrized Haagerup tensor product} $\otimes_\mu$, where given $w \in M_n(X \otimes Y)$ we define
\begin{align}\label{Def_mu_norm}
\| w\|_{\mu}:= \inf \{ \|w_1\|_{M_n(X \otimes_h Y)} + \|w_2^T\|_{M_n(Y \otimes_h X)}: w = w_1 + w_2\}.
\end{align}Here, for $w=\sum_ia_i\otimes x_i\otimes y_i$, we have let $w_2^T=\sum_ia_i\otimes y_i\otimes x_i$.  

Given an operator space $X$, its universal (unital) operator algebras $OA(X)$ (see \cite[Chapter 6]{Pisierbook} for a detailed construction of the algebra) is characterized by the following universal property: for any complete contraction $T:X\rightarrow \mathcal B(H)$ there exists a unique unital morphism $\pi:OA(X)\rightarrow \mathcal B(H)$ extending $T$, where $X$ is naturally embedded in $OA(X)$. This algebra is particularly interesting since, for operator spaces $X$ and $Y$, it is known \cite[Lemma 5]{Oi99} that
\begin{align*}
X\otimes_{\mu} Y\subset OA(X)\otimes_{max} OA(Y)\hspace{0.3 cm}\text{completely isometrically.}
\end{align*} Here, the max norm is defined at the level of unital operator algebras. In particular, the previous inclusion shows that for any element $w\in M_n(X\otimes Y) $, we have $$\|w\|_{M_n(X\otimes_{\mu} Y)}=\sup \Big\|\big(\text{Id}\otimes T\odot S\big)(w)\Big\|_{M_n(\mathcal B(H))},$$where the supremum runs over all complex Hilbert spaces $H$ and all complete contractions $T:X\rightarrow \mathcal B(H)$ and $S:Y\rightarrow \mathcal B(H)$ with commuting ranges and where we have denoted $T\odot S$ the operator such that $T\odot S(x\otimes y)=T(x)S(y)$.

On the other hand, the universal (unital) $C^*$-algebra associated to an operator space $X$, $C^*\langle X\rangle$ (see \cite[Chapter 8]{Pisierbook} for a detailed construction of the algebra\footnote{This algebra should not be confused with the injective envelop of an operator space $I(X)$ as discussed in \cite{BlPa01, Ha79}.}), is characterized by the following universal property: For any complete contraction $T:X\rightarrow \mathcal B(H)$ there exists a unique unital $*$-homomorphism $\pi:C^*\langle X\rangle\rightarrow \mathcal B(H)$ extending $T$, where $X$ is naturally embedded in $C^*\langle X\rangle$. It can be seen that $OA(X)$ can be  identified with the closed subalgebra generated by $X$ in $C^*\langle X\rangle$ so that $OA(X)\subset C^*\langle X\rangle$ completely isometrically. One then defines the max tensor product of two operator spaces $X$ and $Y$ so that the following inclusion is a complete isometry: $$X\otimes_{max}Y\subset C^*\langle X\rangle\otimes_{max} C^*\langle Y\rangle,$$where $C^*\langle X\rangle\otimes_{max} C^*\langle Y\rangle$ denotes the corresponding tensor product in the category of $C^*$-algebras. In particular, if we use bracket notation $[A,B]$ for the commutator of two operators in $\mathcal B(H)$, for any element $w\in M_n(X\otimes Y) $ we have \begin{align}\label{Def_max_norm}
\|w\|_{M_n(X\otimes_{max} Y)}=\sup \Big\|\big(\text{Id}\otimes T\odot S\big)(w)\Big\|_{M_n(\mathcal B(H))},
\end{align}where the supremum is taken over all complex Hilbert spaces $H$ and all complete contractions $T:X\rightarrow \mathcal B(H)$ and $S:Y\rightarrow \mathcal B(H)$ such that $[T(x), S(y)]=[T(x), S(y)^\dag]=0$ for every $x\in X$ and $y\in Y$. While the universal algebra is in general an unknown object, it is well known that $C^*\langle \ell_1^n\rangle =C^*(\mathbb F_{n-1})$ and $C^*\langle S_1^n\rangle =B_n$, where $C^*(\mathbb F_{n-1})$ is the full $C^*$-algebra associated to the free group and $B_n$ denotes the Brown algebra.

\subsection{Some basic results involving the Haagerup tensor norm}

In this section, we present some auxiliary results that will significantly simplify the readability of the proofs for the main theorems.
\begin{lemma}\label{Lemma_Aux_I}
Given any natural number $n$, we have $$\|\text{Id}: S_1^n\hat{\otimes}(S_1^n\otimes_{min} S_\infty^n)\rightarrow  S_1^{{n^2}}\otimes_{min} S_\infty^n\|_{cb}\leq 1.$$
\end{lemma}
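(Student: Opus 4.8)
The plan is to avoid expanding everything into row and column spaces and instead factor the map through a single general commutation inequality between the operator space projective and minimal tensor products, after recognizing the completely isometric identification $S_1^n \hat{\otimes} S_1^n = S_1^{n^2}$. This identification I would obtain from the finite-dimensional duality $(X\otimes_{min}Y)^* = X^*\hat{\otimes}Y^*$ recorded in the preliminaries, applied with $X=Y=S_\infty^n$: since $S_\infty^n\otimes_{min}S_\infty^n = S_\infty^{n^2}$, taking duals gives $S_1^{n^2} = (S_\infty^{n^2})^* = S_1^n\hat{\otimes}S_1^n$. (Alternatively it drops out of the row/column identities, writing $S_1^n\hat{\otimes}S_1^n = R_n\otimes_h S_1^n\otimes_h C_n = R_n\otimes_h R_n\otimes_h C_n\otimes_h C_n = R_{n^2}\otimes_h C_{n^2} = S_1^{n^2}$.) Under this identification the map in the statement is exactly the canonical map $a\otimes(b\otimes d)\mapsto (a\otimes b)\otimes d$ from $S_1^n\hat{\otimes}(S_1^n\otimes_{min}S_\infty^n)$ into $(S_1^n\hat{\otimes}S_1^n)\otimes_{min}S_\infty^n$.

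Hence it suffices to prove the following general statement, which I expect to be the heart of the argument: for arbitrary operator spaces $A,B,D$ the canonical map
\[
\Psi: A\hat{\otimes}(B\otimes_{min}D)\longrightarrow (A\hat{\otimes}B)\otimes_{min}D,\qquad a\otimes(b\otimes d)\mapsto (a\otimes b)\otimes d,
\]
is completely contractive. To see this I would invoke the defining universal property of the projective tensor product, i.e.\ the completely isometric identification $CB(A\hat{\otimes}Z,W)=CB(A,CB(Z,W))$, with $Z=B\otimes_{min}D$ and $W=(A\hat{\otimes}B)\otimes_{min}D$. This reduces the complete contractivity of $\Psi$ to that of the associated map $\theta: A\to CB(B\otimes_{min}D,(A\hat{\otimes}B)\otimes_{min}D)$ sending $a$ to $\mu_a\otimes\text{Id}_D$, where $\mu_a:B\to A\hat{\otimes}B$ is $b\mapsto a\otimes b$; indeed $\|\Psi\|_{cb}=\|\theta\|_{cb}$ under this adjunction.

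The final step is to show $\theta$ is completely contractive, which I would do by factoring $\theta=\Gamma\circ\lambda$. Here $\lambda:A\to CB(B,A\hat{\otimes}B)$, $a\mapsto\mu_a$, is completely contractive because under the same adjunction $CB(A,CB(B,A\hat{\otimes}B))=CB(A\hat{\otimes}B,A\hat{\otimes}B)$ it corresponds precisely to the identity map, of cb-norm $1$; and $\Gamma:CB(B,A\hat{\otimes}B)\to CB(B\otimes_{min}D,(A\hat{\otimes}B)\otimes_{min}D)$, $T\mapsto T\otimes\text{Id}_D$, is completely contractive by the metric mapping property of $\otimes_{min}$ (tensoring with the complete contraction $\text{Id}_D$ does not increase the cb-norm, at every matrix level, using $M_k(E\otimes_{min}D)=M_k(E)\otimes_{min}D$). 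Composing yields $\|\theta\|_{cb}\le 1$, hence $\|\Psi\|_{cb}\le 1$, and specializing to $A=B=S_1^n$ and $D=S_\infty^n$ gives the claim.

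I expect the main obstacle to be purely conceptual: spotting that the stated ``identity'' is really the composite of the commutation map $\Psi$ with the identification $S_1^n\hat{\otimes}S_1^n=S_1^{n^2}$, so that the factor-of-$n$ discrepancies between $\otimes_{min}$ and $\hat{\otimes}$ (as in the estimate $\|\text{Id}/n:S_\infty^n\otimes_{min}X\to S_1^n\hat{\otimes}X\|_{cb}=1$) never intervene. Once this is seen, everything reduces to the universal property of $\hat{\otimes}$ and the metric mapping property of $\otimes_{min}$; the only points requiring a little care are the verification that $\lambda$ corresponds to the identity under the adjunction and that the matrix-level estimates for $\Gamma$ genuinely follow from the metric mapping property. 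I would deliberately avoid the alternative route of expanding both sides into six-fold Haagerup products of $R_n$ and $C_n$ and matching them by a permutation, since reorderings of Haagerup factors are not completely contractive in general and tracking which shuffles survive would be considerably more delicate.
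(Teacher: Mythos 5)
Your proposal is correct, but it takes a genuinely different route from the paper's. You reduce the lemma to the canonical identification $S_1^n\hat{\otimes}S_1^n=S_1^{n^2}$ (which is legitimate, via the finite-dimensional duality $(S_\infty^n\otimes_{min}S_\infty^n)^*=S_1^n\hat{\otimes}S_1^n$) plus a general commutation principle: the canonical map $\Psi\colon A\hat{\otimes}(B\otimes_{min}D)\to(A\hat{\otimes}B)\otimes_{min}D$ is completely contractive for \emph{all} operator spaces. Your functorial proof of this checks out: the adjunction $CB(A\hat{\otimes}Z,W)=CB(A,CB(Z,W))$ is indeed a complete isometry, the slice map $\lambda$ does correspond to $\mathrm{Id}_{A\hat{\otimes}B}$ under it, and the matrix-level contractivity of $\Gamma\colon T\mapsto T\otimes\mathrm{Id}_D$ follows, as you say, from $M_k(CB(B,E))=CB(B,M_k(E))$, $M_k(E)\otimes_{min}D=M_k(E\otimes_{min}D)$ and the metric mapping property. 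The paper instead argues by concrete Haagerup calculus: it rewrites $S_1^n\hat{\otimes}(S_1^n\otimes_{min}S_\infty^n)$ as the six-fold product $(R_n^A\otimes_h C_n^B)\otimes_h R_n^C\otimes_h C_n^C\otimes_h(R_n^B\otimes_h C_n^A)$, applies the complete contraction $\mathrm{Id}\colon R_n\otimes_h C_n\to C_n\otimes_h R_n$ (i.e., $\mathrm{Id}\colon S_1^n\to S_\infty^n$) twice, and reassembles via $R_n\otimes_h R_n=R_{n^2}$ and $C_n\otimes_h C_n=C_{n^2}$ to land in $S_\infty^n(S_1^{n^2})$. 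Your argument buys generality and conceptual clarity --- the contraction $\Psi$ holds for arbitrary $A,B,D$, isolates the mechanism, and correctly identifies the direction in which no dimensional factor is lost (the reverse map would cost a factor of $n$, as in the paper's estimate for $\mathrm{Id}/n$); the paper's computation buys self-containedness, since the same row/column toolkit is developed anyway and reused for the surrounding auxiliary lemmas. One small correction to your closing remark: the paper's route is not as delicate as you fear, because the only Haagerup ``shuffles'' it performs are of the specific form $R\otimes_h C\to C\otimes_h R$, which are completely contractive; you are right that generic reorderings of Haagerup factors fail, but none are used.
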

\begin{proof}
According to the properties of the min and the Haagerup tensor norms, the following identifications are complete isometries.
\begin{align*}
S_1^n\hat{\otimes}(S_1^n\otimes_{min} S_\infty^n)=S_1^n\hat{\otimes}(S_\infty^n\otimes_{min} S_1^n)&= R_n^A\otimes_h(C_n^B\otimes_h(R_n^C\otimes_h C_n^C)\otimes_h R_n^B)\otimes_hC_n^A\\&=(R_n^A\otimes_hC_n^B)\otimes_hR_n^C\otimes_h C_n^C\otimes_h (R_n^B\otimes_hC_n^A).
\end{align*}Note that we have used the super indexes $A$, $B$ y $C$ to denote the three spaces appearing in the tensor product $S_1^n\hat{\otimes}(S_\infty^n\otimes_{min} S_1^n)$. This allows us to indicate to which of these spaces the column or row spaces correspond in the following equalities.

Now, since $\|\text{Id}: R_n^A\otimes_hC_n^B\rightarrow C_n^B\otimes_hR_n^A\|_{cb}\leq 1$ and $\|\text{Id}: R_n^B\otimes_hC_n^A\rightarrow C_n^A\otimes_h R_n^B\|_{cb}\leq 1$, we conclude our proof by noticing the following completely isometric identifications:
\begin{align*}
(C_n^B\otimes_hR_n^A)\otimes_hR_n^C\otimes_h C_n^C\otimes_h (C_n^A\otimes_h R_n^B)&=C_n^B\otimes_h(R_n^A\otimes_hR_n^C\otimes_h C_n^C\otimes_h C_n^A)\otimes_h R_n^B\\&=C_n^B\otimes_h(R_{{n^2}}^{AC}\otimes_h C_{{n^2}}^{AC})\otimes_h R_n^B
\\&= S_\infty^n(S_1^{{n^2}}).
\end{align*}
\end{proof}

\begin{lemma}\label{Lemma_Aux_II}
Given two operator spaces $X$ and $Y$ and any natural number $n$, we have
\begin{enumerate}
\item[a)] $\|T:X\otimes_hM_n(Y)\rightarrow M_n(X\otimes_hY)\|_{cb}\leq 1$, where $T$ is defined as $T(x\otimes A\otimes y)=A\otimes x\otimes y$ on elementary tensors.
\item[b)] $\|\text{Id}:M_n(X)\otimes_hY\rightarrow M_n(X\otimes_hY)\|_{cb}\leq 1.$
\end{enumerate}
\end{lemma}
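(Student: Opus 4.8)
The plan is to reduce both statements to two elementary ``flip'' estimates for the row and column factors. The starting point is the completely isometric identification $M_n(Z)=S_\infty^n\otimes_{min}Z=C_n\otimes_h Z\otimes_h R_n$, valid for every operator space $Z$ by the first identity in \eqref{Identities_S_infty_S_1}. Combined with the associativity of $\otimes_h$, this lets me write every space appearing in (a) and (b) as a single iterated Haagerup product in which only the \emph{order} of the factors differs between domain and target; the two maps $T$ and $\text{Id}$ then become, up to the canonical identifications, transpositions of two neighbouring factors, and the metric mapping property reduces the cb-norm of the whole map to the cb-norm of the local flip.

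Next I would establish the two flips. For any operator space $W$, I claim
\begin{align*}
\|\text{flip}: W\otimes_h C_n\to C_n\otimes_h W\|_{cb}\leq 1,\qquad
\|\text{flip}: R_n\otimes_h W\to W\otimes_h R_n\|_{cb}\leq 1.
\end{align*}
Both follow from the same three ingredients: the minimal norm is dominated by the Haagerup norm (so $\text{Id}:W\otimes_h C_n\to W\otimes_{min}C_n$ and $\text{Id}:R_n\otimes_h W\to R_n\otimes_{min}W$ are complete contractions), the complete commutativity of $\otimes_{min}$ (so $W\otimes_{min}C_n=C_n\otimes_{min}W$ and $R_n\otimes_{min}W=W\otimes_{min}R_n$ via the flip), and the identities $C_n\otimes_{min}W=C_n\otimes_h W$ and $W\otimes_{min}R_n=W\otimes_h R_n$ recalled above. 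Composing these three completely contractive steps yields the two displayed flip estimates.

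Finally I would assemble the two parts. For (a), associativity gives $X\otimes_h M_n(Y)=(X\otimes_h C_n)\otimes_h(Y\otimes_h R_n)$ and $M_n(X\otimes_h Y)=(C_n\otimes_h X)\otimes_h(Y\otimes_h R_n)$, and under these identifications $T$ is exactly $\text{flip}\otimes\text{Id}$ with the flip acting as $W\otimes_h C_n\to C_n\otimes_h W$ for $W=X$; the metric mapping property then gives $\|T\|_{cb}\leq 1$. For (b), associativity gives $M_n(X)\otimes_h Y=(C_n\otimes_h X)\otimes_h(R_n\otimes_h Y)$ and $M_n(X\otimes_h Y)=(C_n\otimes_h X)\otimes_h(Y\otimes_h R_n)$, and the map is $\text{Id}\otimes\text{flip}$ with the flip $R_n\otimes_h W\to W\otimes_h R_n$ for $W=Y$, so again $\|\text{Id}\|_{cb}\leq 1$. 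The only genuinely delicate point—and the step I would write out most carefully—is verifying that, after inserting the identifications $M_n(\cdot)=C_n\otimes_h(\cdot)\otimes_h R_n$ and expanding an elementary tensor $E_{ij}\otimes z$ as $e_i\otimes z\otimes e_j^*$, the maps $T$ and $\text{Id}$ really do coincide with the claimed flip-tensor-identity maps on elementary tensors; everything else is a routine application of associativity and the metric mapping property.
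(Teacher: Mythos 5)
Your proposal is correct and follows essentially the same route as the paper's proof: both decompose via $M_n(Z)=C_n\otimes_h Z\otimes_h R_n$, use associativity to rewrite domain and target as $(X\otimes_h C_n)\otimes_h(Y\otimes_h R_n)$ resp.\ $(C_n\otimes_h X)\otimes_h(R_n\otimes_h Y)$, and reduce everything via the metric mapping property to the single-factor flips $X\otimes_h C_n\to C_n\otimes_h X$ and $R_n\otimes_h Y\to Y\otimes_h R_n$. The only difference is that you additionally justify these flip contractions (via domination of $\otimes_{min}$ by $\otimes_h$, commutativity of $\otimes_{min}$, and the identities $C_n\otimes_{min}W=C_n\otimes_h W$, $W\otimes_{min}R_n=W\otimes_h R_n$), which the paper simply asserts as known.
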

\begin{proof}
In order to prove the first estimate, note that $$X\otimes_hM_n(Y)=(X\otimes_h (C_n\otimes_h Y\otimes_h R_n))=(X\otimes_h C_n)\otimes_h (Y\otimes_h R_n)$$ completely isometrically. Now, $\|\text{Id}:X\otimes_h C_n\rightarrow C_n\otimes_h X\|_{cb}\leq 1$. On the other hand,  $$(C_n\otimes_h X)\otimes_h (Y\otimes_h R_n)=C_n\otimes_h (X\otimes_h Y)\otimes_h R_n=M_n(X\otimes_h Y)$$completely isometrically. Hence, we prove the estimate a).

In order to prove the second estimate, note that $$M_n(X)\otimes_hY= (C_n\otimes_h X\otimes_h R_n)\otimes_h Y=(C_n\otimes_h X)\otimes_h (R_n\otimes_h Y)$$completely isometrically. Now, $\|\text{Id}:R_n\otimes_h Y\rightarrow Y\otimes_h R_n\|_{cb}\leq 1$. On the other hand,  $$(C_n\otimes_h X)\otimes_h (Y\otimes_h R_n)=C_n\otimes_h (X\otimes_h Y)\otimes_h R_n=M_n(X\otimes_h Y)$$completely isometrically. Hence, estimate b) follows.
\end{proof}

 \begin{lemma}\label{lemma_haag_P_1}
 Let $X$ be an operator space and $(x_k)_k\subset X$ any sequences. Then, $$\Big\|\sum_k e_k\otimes x_k\otimes e_k\Big\|_{R\otimes_hX\otimes_hR}=\big(\sum_k \|x_k\|^2_{X}\Big)^{\frac{1}{2}}.$$
  \end{lemma}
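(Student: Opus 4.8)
The plan is to prove the two inequalities separately. The only tools needed are the three-fold Haagerup factorization and a pair of elementary row/column norm identities. Recall that, by associativity of $\otimes_h$, for $w\in R\otimes_h X\otimes_h R$ one has $\|w\|_{R\otimes_h X\otimes_h R}=\inf \|A\|_{M_{1,p}(R)}\,\|B\|_{M_{p,q}(X)}\,\|C\|_{M_{q,1}(R)}$, the infimum running over all factorizations $w=\sum_{i=1}^{p}\sum_{j=1}^{q}A_i\otimes B_{ij}\otimes C_j$ with row $A=(A_i)_i$, matrix $B=(B_{ij})$ and column $C=(C_j)_j$. I will use that in $R$ (where $e_k\mapsto E_{1k}$) a weighted row $(\lambda_1 e_1,\dots,\lambda_n e_n)$ has norm $(\sum_k|\lambda_k|^2)^{1/2}=\|\sum_k|\lambda_k|^2 e_ke_k^*\|^{1/2}$, whereas a column $(e_1,\dots,e_n)^T$ has norm $\|\sum_k e_k^*e_k\|^{1/2}=1$; dually in $C$ (where $e_k\mapsto E_{k1}$) the row $(e_1,\dots,e_n)$ has norm $1$ and the weighted column $(\lambda_1 e_1,\dots,\lambda_n e_n)^T$ has norm $(\sum_k|\lambda_k|^2)^{1/2}$. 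Reducing first to finitely supported sequences, I may assume $x_1,\dots,x_n$ are all nonzero.

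For the upper bound I would exhibit one explicit factorization. Set $u_k=x_k/\|x_k\|\in X$, and take the weighted row $A=(\|x_1\|e_1,\dots,\|x_n\|e_n)\in M_{1,n}(R)$, the diagonal matrix $B=\mathrm{diag}(u_1,\dots,u_n)\in M_n(X)$, and the column $C=(e_1,\dots,e_n)^T\in M_{n,1}(R)$. Since $B$ is diagonal, $\sum_i A_i\otimes B_{ii}\otimes C_i=\sum_k \|x_k\|\,e_k\otimes u_k\otimes e_k=w$, while the identities above give $\|A\|=(\sum_k\|x_k\|^2)^{1/2}$, $\|B\|=\max_k\|u_k\|=1$ and $\|C\|=1$. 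Hence $\|w\|\leq(\sum_k\|x_k\|^2)^{1/2}$. The point is to place the $\ell_2$-weights on the row $R$-factor and keep the column $R$-factor at the norming configuration.

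For the lower bound I would use the self-duality of $\otimes_h$, which yields a contractive pairing $C\otimes_h X^*\otimes_h C\to(R\otimes_h X\otimes_h R)^*$ (recall $R^*=C$). Choose unit functionals $\phi_k\in X^*$ with $\phi_k(x_k)=\|x_k\|$ and set $\eta=\sum_k\|x_k\|\,e_k\otimes\phi_k\otimes e_k\in C\otimes_h X^*\otimes_h C$. Because the two outer $C$–$R$ pairings both force the indices to coincide, the duality pairing produces no cross terms and $\langle\eta,w\rangle=\sum_k\|x_k\|\,\phi_k(x_k)=\sum_k\|x_k\|^2$. Running the factorization of the previous paragraph in $C$ instead of $R$ — now keeping the row $C$-factor $(e_1,\dots,e_n)$ at norm $1$, putting $\mathrm{diag}(\phi_1,\dots,\phi_n)$ (norm $1$) in the middle, and carrying the weights on the column $C$-factor — bounds $\|\eta\|_{C\otimes_h X^*\otimes_h C}\leq(\sum_k\|x_k\|^2)^{1/2}$. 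Therefore $\|w\|\geq|\langle\eta,w\rangle|/\|\eta\|\geq(\sum_k\|x_k\|^2)^{1/2}$, which matches the upper bound and closes the proof.

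The step I expect to be the genuine obstacle is the lower bound, and specifically the correct weighting of the dual element: pairing $w$ against the unweighted $\sum_k e_k\otimes\phi_k\otimes e_k$ only gives $\sum_k\|x_k\|$ against a dual norm of order $\sqrt{n}$, which by Cauchy–Schwarz is strictly too weak; inserting the weights $\|x_k\|$ is exactly what makes the two estimates meet. It is worth stressing why duality cannot be bypassed: the \emph{minimal} norm of the same diagonal element equals $\|(x_1,\dots,x_n)\|_{\mathcal B(H^n,H)}=\|\sum_k x_kx_k^*\|^{1/2}$, which can be arbitrarily smaller than $(\sum_k\|x_k\|^2)^{1/2}$ (e.g.\ for $x_k=E_{k1}$ it equals $1$ while the right-hand side is $\sqrt n$), so no argument through $\otimes_{min}$ can yield the lower bound. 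Finally, an infinite sequence is handled by truncation: the upper bound shows the tails have norm at most $(\sum_{k>N}\|x_k\|^2)^{1/2}$, so the partial sums converge and the formula passes to the limit when $\sum_k\|x_k\|^2<\infty$, while if $\sum_k\|x_k\|^2=\infty$ the lower bound on truncations forces both sides to be infinite.
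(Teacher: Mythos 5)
Your proof is correct, and its overall architecture coincides with the paper's: an upper bound by direct estimate, then a lower bound by pairing against an element of $C\otimes_h X^*\otimes_h C$ via $R^*=C$ and the self-duality of the Haagerup tensor product. The differences are in the mechanics of each half. For the upper bound, the paper realizes $Y=R\otimes_h X$ concretely inside some $\mathcal B(H)$, sets $y_k=e_k\otimes x_k$ (so $\|y_k\|=\|x_k\|$ by the cross-norm property), and uses the row identity $\big\|\sum_k y_k\otimes e_k\big\|_{\mathcal B(H)\otimes_h R}^2=\big\|\sum_k y_ky_k^*\big\|$ together with the triangle inequality $\big\|\sum_k y_ky_k^*\big\|\le\sum_k\|y_k\|^2$; you instead exhibit an explicit factorization $w=\sum_{i,j}A_i\otimes B_{ij}\otimes C_j$ with the $\ell_2$-weights carried by the left row factor and a diagonal middle matrix, which is equivalent but avoids the embedding step and displays the infimum in the Haagerup norm directly. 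For the lower bound, the paper proves the symmetric upper estimate in $C\otimes_h X^*\otimes_h C$ and then invokes the dualities $(R\otimes_h X\otimes_h R)^*=C\otimes_h X^*\otimes_h C$ and $(\ell_2(X))^*=\ell_2(X^*)$ abstractly; your weighted element $\eta=\sum_k\|x_k\|\,e_k\otimes\phi_k\otimes e_k$, built from Hahn--Banach norming functionals, is precisely the explicit witness realizing that $\ell_2$-duality, and your observation that the unweighted pairing only yields $\sum_k\|x_k\|$ against a dual norm of order $\sqrt n$ correctly pinpoints why the weights are essential. Your truncation argument for infinite sequences, and the side remark that no route through $\otimes_{\min}$ can work because the min norm of the same element is only $\big\|\sum_k x_kx_k^*\big\|^{1/2}$, are both accurate; the paper handles the infinite case implicitly through the $\ell_2(X)$--$\ell_2(X^*)$ duality. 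In short: same proof strategy, with your version more explicit and self-contained, and the paper's marginally shorter because it delegates the upper bound to a $C^*$-identity and the norming to abstract duality.
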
 
 \begin{proof}
 Consider the operator space $Y=R\otimes_hX\subset \mathcal B(H)$, for a certain Hilbert space $H$ and denote $y_k=e_k\otimes x_k\in \mathcal B(H)$ for every $k$. It is clear that $\|y_k\|=\|x_k\|$ for every $k$. In addition, using the injectivity of the Haagerup tensor norm, we can write 
 \begin{align*}
 \Big\|\sum_k y_k\otimes e_k\Big\|^2_{R\otimes_hX\otimes_h R}=\Big\|\sum_k y_k\otimes e_k\Big\|^2_{\mathcal B(H)\otimes_h R}= \Big\|\sum_k y_ky_k^*\Big\|_{\mathcal B(H)}\leq \sum_k \|y_k\|^2_{\mathcal B(H)} =\sum_k \|x_k\|^2_{X}, 
  \end{align*}where we have used the triangle inequality.

In order to prove the converse inequality, note that a completely analogous proof applies if we replace $R$ with $C$ and $X$ with $X^*$, so that we can prove that for every sequence $(x_k^*)_k\subset X^*$ the following inequality holds:
$$\Big\|\sum_k e_k\otimes x_k^*\otimes e_k\Big\|^2_{C\otimes_hX^*\otimes_h C}\leq \sum_k \|x_k^*\|^2_{X^*}.$$

Hence, since $(R\otimes_hX\otimes_hR)^*=C\otimes_hX^*\otimes_h C$ (completely) isometrically and also $(\ell_2(X))^*= \ell_2(X^*)$ isometrically, the inequality $$\Big(\sum_k \|x_k\|^2_{X}\Big)^{\frac{1}{2}}\leq \Big\|\sum_k e_k\otimes x_k\otimes e_k\Big\|_{R\otimes_hX\otimes_hR},$$follows by duality.
 \end{proof}
 
 \begin{lemma}\label{lemma_haag_P_2}
  Let $X$ be an operator space and $(x_k)_k\subset X$ any sequences. Then, $$\Big\|\sum_k e_k\otimes x_k\otimes e_k\Big\|_{R\otimes_hX\otimes_hC}=\sum_k \|x_k\|_{X}.$$
 \end{lemma}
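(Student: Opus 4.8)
The plan is to prove the two inequalities separately; the upper bound is immediate and all the content lies in the lower bound, which I would obtain by duality in the spirit of the second half of the proof of Lemma \ref{lemma_haag_P_1}.

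For the upper bound, recall that $\otimes_h$ is a cross norm, so on each elementary tensor $\|e_k\otimes x_k\otimes e_k\|_{R\otimes_hX\otimes_hC}=\|e_k\|_R\,\|x_k\|_X\,\|e_k\|_C=\|x_k\|_X$, since the standard basis vectors are unit vectors in both $R$ and $C$. The triangle inequality then gives $\big\|\sum_k e_k\otimes x_k\otimes e_k\big\|\le \sum_k\|x_k\|_X$. (In contrast to Lemma \ref{lemma_haag_P_1}, where the $R\otimes_hX\otimes_hR$ norm is the strictly smaller $\ell_2$-quantity and this crude bound is not tight, here it turns out to be exactly the answer.)

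For the lower bound I would argue as follows. Since $\otimes_h$ is self-dual and $R^*=C$, $C^*=R$, one has the isometric identification $(R\otimes_hX\otimes_hC)^*=C\otimes_hX^*\otimes_hR$, and by the left-hand identity in \eqref{Identities_S_infty_S_1} the latter space is $S_\infty\otimes_{min}X^*$. Fix $N\in\mathbb N$ and choose norming functionals $\phi_1,\dots,\phi_N\in X^*$ with $\|\phi_k\|\le 1$ and $\phi_k(x_k)=\|x_k\|_X$; set $\eta_N=\sum_{k=1}^N e_k\otimes\phi_k\otimes e_k\in C\otimes_hX^*\otimes_hR$ (first leg in $C$, last leg in $R$), which under the identification \eqref{Identities_S_infty_S_1} is the finite diagonal element $\sum_{k=1}^N e_{kk}\otimes\phi_k\in S_\infty\otimes_{min}X^*$. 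The key estimate is $\|\eta_N\|\le 1$: because the diagonal embedding $c_0\hookrightarrow S_\infty$ is a complete isometry (it is the inclusion of a $C^*$-subalgebra) and $c_0\otimes_{min}X^*=c_0(X^*)$, one gets $\|\eta_N\|=\max_{k\le N}\|\phi_k\|\le 1$. Writing $w=\sum_k e_k\otimes x_k\otimes e_k$, which corresponds to $\sum_k e_{kk}\otimes x_k\in S_1\hat{\otimes}X=R\otimes_hX\otimes_hC$, the trace duality pairing yields $\langle w,\eta_N\rangle=\sum_{k,l}\tr(e_{kk}e_{ll})\phi_l(x_k)=\sum_{k=1}^N\|x_k\|_X$. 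Hence $\|w\|\ge\langle w,\eta_N\rangle/\|\eta_N\|\ge\sum_{k=1}^N\|x_k\|_X$ for every $N$, and letting $N\to\infty$ gives $\|w\|\ge\sum_k\|x_k\|_X$. Combined with the upper bound, this proves the equality.

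The step I expect to require the most care is keeping the Haagerup identifications and their duality pairing consistent—in particular tracking which leg carries $R$ and which carries $C$ after dualizing, and checking that the diagonal functional $\eta_N$ genuinely evaluates to $\sum_{k}\phi_k(x_k)$ under the pairing inherited from \eqref{Identities_S_infty_S_1}. The norm bound $\|\eta_N\|\le 1$ itself is then routine, and using finite truncations $\eta_N$ (rather than the full diagonal) sidesteps any compactness issue in $S_\infty$ in the case of infinite sequences.
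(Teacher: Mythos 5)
Your proof is correct, but it unpacks what the paper compresses into one line. The paper's proof simply invokes the isometric identification $R\otimes_hX\otimes_hC=S_1(X)$ and the fact that restricting to the diagonal of $S_1$ gives $\ell_1$ --- i.e., the diagonal copy of $\ell_1(X)$ sits isometrically inside $S_1(X)$, which yields both inequalities at once (the diagonal inclusion and the diagonal conditional expectation are both complete contractions). You instead prove the two bounds by hand: the upper bound from the cross-norm property of $\otimes_h$ plus the triangle inequality, and the lower bound by pairing against the truncated diagonal functional $\eta_N=\sum_{k\le N}e_k\otimes\phi_k\otimes e_k$, whose norm you control via the completely isometric diagonal embedding $c_0\hookrightarrow S_\infty$ and injectivity of $\otimes_{\min}$. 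Your lower-bound computation is in effect a self-contained proof that the diagonal projection of $S_1(X)$ onto $\ell_1(X)$ is contractive, so the two arguments have the same mathematical core; yours buys independence from the ``diagonal of $S_1$'' fact at the cost of length, and the finite truncations $\eta_N$ are a clean way to handle infinite sequences. One small caveat: in infinite dimensions the identity $(R\otimes_hX\otimes_hC)^*=C\otimes_hX^*\otimes_hR$ should be read as a (completely) isometric \emph{embedding} $C\otimes_hX^*\otimes_hR\hookrightarrow(R\otimes_hX\otimes_hC)^*$ rather than an equality (the full dual is the larger space $CB(S_1,X^*)$), but since you only use that each $\eta_N$ acts as a contractive functional, this does not affect the argument --- and the paper makes the same identification in the proof of Lemma \ref{lemma_haag_P_1}.
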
 
 \begin{proof}
 The statement follows from the isometric identification $R\otimes_hX\otimes_hC=S_1(X)$, simply by restricting to the diagonal of $S_1$, which is $\ell_1$.
 \end{proof}

\section{Proof of Theorem \ref{thm1}}

In this section we prove the following theorem, from which Theorem \ref{thm1} follows immediately.
\begin{theorem}\label{Thm_Main_I}
For every prime number $n$ there exists an element $\eta_n\in C_{n^4}\otimes S_1^n\otimes S_1^n$ such that $\|\eta_n\|_{C_{n^4}\otimes_{min} (S_1^n\otimes_{max} S_1^n)}\leq \sqrt{2}n^{\frac{3}{4}}$ and $\|\eta_n\|_{C_{n^4}\otimes_{min} (S_1^n\otimes_{\mu} S_1^n)}\geq  n$.
\end{theorem}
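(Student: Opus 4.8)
The plan is to produce $\eta_n$ as an explicit \emph{generalized CHSH$_n$ correlation tensor} assembled from the $n^2$ discrete Weyl (clock--shift) unitaries on $\ell_2^n$, and then to separate the two norms by the classical dichotomy underlying Tsirelson's bound: full commutation (the $\max$ side) admits a sum-of-squares estimate, whereas commutation of ranges only (the $\mu$ side) does not. For prime $n$ I would fix $U,V\in M_n$ with $U|j\rangle=|j+1\rangle$ and $V|j\rangle=\omega^j|j\rangle$ ($\omega=e^{2\pi i/n}$), set $W_s=U^{s_1}V^{s_2}$ for $s=(s_1,s_2)\in\Z_n^2$, and take
\[
\eta_n=\sum_{\alpha}e_\alpha\otimes z_\alpha,\qquad z_\alpha=\sum_{s,t\in\Z_n^2}c^{\alpha}_{s,t}\,W_s\otimes W_t\in S_1^n\otimes S_1^n,
\]
where $\alpha$ runs over an $n^4$-element index set, $(e_\alpha)$ is the canonical column basis of $C_{n^4}$, and the unimodular coefficients $c^{\alpha}_{s,t}$ form a symplectic Gauss kernel on $\Z_n^2\times\Z_n^2$. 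Primality is used here to guarantee that $\Z_n$ is a field, so that the attached character sums are genuine Gauss sums of modulus $\sqrt n$ and are nondegenerate.

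The first reduction is to strip off the outer column factor. Using the completely isometric identity $C\otimes_{min}Z=C\otimes_h Z$, the injectivity of $\otimes_{min}$, and the descriptions (\ref{Def_max_norm}) and (\ref{Def_mu_norm}) of the $\max$ and $\mu$ norms through (fully) commuting pairs $T\odot S$, one obtains for $\beta\in\{\max,\mu\}$ the identity
\[
\|\eta_n\|_{C_{n^4}\otimes_{min}(S_1^n\otimes_\beta S_1^n)}=\sup_{T,S}\Big\|\sum_{\alpha}(T\odot S)(z_\alpha)^{*}\,(T\odot S)(z_\alpha)\Big\|^{1/2}_{\mathcal B(H)},
\]
where $T,S\colon S_1^n\to\mathcal B(H)$ range over complete contractions whose ranges commute (for $\beta=\mu$) and commute together with their adjoints (for $\beta=\max$). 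I would record the useful dictionary $S_1^n=(S_\infty^n)^{*}$, so that a complete contraction $T$ is nothing but an $n\times n$ block contraction $[T_{ij}]\in\mathcal B(H^{\oplus n})$ with $T(E_{ij})=T_{ij}$, and likewise for $S$; this turns the abstract supremum into a supremum over honest operator contractions to which the commutator calculus applies.

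For the \emph{upper bound} ($\beta=\max$) I would run a generalized Tsirelson argument. Full commutation of the two block systems lets one reorganize $\sum_\alpha (T\odot S)(z_\alpha)^*(T\odot S)(z_\alpha)$ into a principal term, controlled by the orthogonality/trace relations of the Weyl operators and the $\sqrt n$ size of the Gauss sums, plus cross terms that collapse into commutators $[T_{ij},S_{kl}]=0$ and so contribute nothing beyond a bounded $\sqrt2$ factor. The outcome is the operator inequality whose square root gives $\|\eta_n\|_{C_{n^4}\otimes_{min}(S_1^n\otimes_{\max}S_1^n)}\le\sqrt2\,n^{3/4}$, the fractional exponent being exactly the square root of a character-sum estimate. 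Since $\otimes_{min}\le\otimes_{\max}$, the same bound governs the $S_1^n\otimes_{min}S_1^n$ version needed for Theorem~\ref{thm1}.

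For the \emph{lower bound} ($\beta=\mu$) I would exhibit a single admissible pair $T,S$ with commuting ranges but \emph{non}-commuting adjoints — necessarily non-normal, since for unitaries commutation of ranges already forces commutation of adjoints — realized on a Fock-type or free-product space, and chosen so that the phases $c^\alpha_{s,t}$ add coherently inside each column entry rather than cancelling. Because the adjoints no longer commute, the sum-of-squares collapse of the previous paragraph is unavailable and the value saturates the $\ell_1$-type behaviour of Lemma~\ref{lemma_haag_P_2} rather than the $\ell_2$-type behaviour of Lemma~\ref{lemma_haag_P_1}, yielding $\|\eta_n\|_{C_{n^4}\otimes_{min}(S_1^n\otimes_\mu S_1^n)}\ge n$. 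Combining the two estimates gives the ratio $n/(\sqrt2\,n^{3/4})=n^{1/4}/\sqrt2\to\infty$, which rules out a uniform complete isomorphism and hence proves the theorem. I expect the \textbf{main obstacle} to be the sharp $\max$-upper bound: it must accommodate the full completely contractive freedom of $T$ and $S$ (arbitrary block contractions, not merely unitaries) while simultaneously extracting the precise constant $\sqrt2\,n^{3/4}$ from the Gauss/character sums, and it is precisely there that the primality of $n$ is indispensable.
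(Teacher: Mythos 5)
Your proposal has the right global architecture --- a CHSH$_n$-type tensor indexed by an $n^4$-element set, a Fourier/Gauss-sum argument with clock-and-shift unitaries for the $\max$ upper bound, and the observation that $\mu$ differs from $\max$ exactly in the absence of adjoint commutation --- and your ``dictionary'' identifying complete contractions $T:S_1^n\to\mathcal B(H)$ with block contractions $[T_{ij}]$ is precisely the ingredient the paper exploits (via $CB(S_1^n,\mathcal B(H))=S_\infty^n\otimes_{min}\mathcal B(H)$, yielding $\sum_a T(e_{a,x})^\dag T(e_{a,x})\le \uno$ as in Lemma \ref{lemma_basic_1}). But your element is never actually defined: the ``symplectic Gauss kernel'' coefficients $c^\alpha_{s,t}$ are left unspecified, so neither bound can be checked. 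For comparison, the paper takes the completely explicit $\eta_n=\sum_{a+b=xy}e_{xyab,1}\otimes e_{a,x}\otimes e_{b,y}$ built from matrix units, and the $\sqrt2$ in the upper bound does \emph{not} come from cross-commutator terms as you assert; it comes from the singular $k=0$ Fourier mode, via $\frac1n\big(n\cdot n+(n-1)\sqrt n\cdot n\big)\le 2n\sqrt n$, where $\|\Phi_k\|=\sqrt n$ for $k\ne 0$ and $\|\Phi_0\|=n$, primality entering only to make $\frac1{\sqrt n}\Phi_k$ unitary for all $k\ne0$.

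The genuine gap is the lower bound, and it is fatal as written. You propose to exhibit a single pair $T,S$ with commuting ranges but non-commuting adjoints on a Fock or free-product space achieving value $\ge n$, but you give no construction, no verification of complete contractivity or of the commutation relations, and no computation of the resulting value; producing a near-optimal ``strategy'' of this kind is essentially as hard as the theorem itself, and Lemmas \ref{lemma_haag_P_1} and \ref{lemma_haag_P_2}, which compute Haagerup norms of specific diagonal elements, cannot by themselves certify a supremum over such pairs. The paper avoids constructing any operators at all: it works with the decomposition definition \eqref{Def_mu_norm} of the $\mu$-norm and dualizes, pairing $\eta_n$ with the explicit functional $P_n=n^{-2}\sum_{a+b=xy}e_{1,xyab}\otimes e_{a,x}\otimes e_{b,y}$, whose norms $\|P_n\|_{R_{n^4}\widehat{\otimes}(S_\infty^n\otimes_h S_\infty^n)}$ and $\|P_n^T\|_{R_{n^4}\widehat{\otimes}(S_\infty^n\otimes_h S_\infty^n)}$ equal $1$ by Lemma \ref{lemma_basic_2} (this is where the permutation structure $b=xy-a$ and the $\ell_2$-versus-$\ell_1$ dichotomy of Lemmas \ref{lemma_haag_P_1} and \ref{lemma_haag_P_2} actually enter, giving exactly $n^2$ before normalization). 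Then for any decomposition $\eta_n=\eta_{n,1}+\eta_{n,2}$ one gets $n=\langle\eta_n,P_n\rangle=\langle\eta_{n,1},P_n\rangle+\langle\eta_{n,2}^T,P_n^T\rangle\le\|\eta_{n,1}\|_{C_{n^4}\otimes_{min}(S_1^n\otimes_h S_1^n)}+\|\eta_{n,2}^T\|_{C_{n^4}\otimes_{min}(S_1^n\otimes_h S_1^n)}$, which is the $\mu$ lower bound; note that the transposed witness $P_n^T$ is indispensable because $\mu$ is an infimum over two-sided decompositions, a point your sup-over-pairs sketch never confronts. Until you either supply the explicit pair $(T,S)$ with all verifications, or replace that step by a duality certificate of this kind, the statement remains unproven.
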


Here and throughout the rest of the paper, we are considering $\mathbb Z_n=\{0,1,\ldots , n-1\}$ with addition and multiplication modulo $n$. For consistency in notation, we will represent the canonical basis of $\C^n$ as $\{e_i:\, i=0,1,\ldots, n-1\}$. Additionally, we use the notation $e_{i,j}$ to denote the matrix whose entries are all zero except for a one in the $i$-th row and $j$-th column. 

The upper bound in the previous result will follow from the next two lemmas.
  \begin{lemma}\label{lemma_basic_1}
  Let $T:S_1^n\rightarrow \mathcal B(H)$ be a complete contraction and denote $E_x^a=T(e_{a,x})^\dag T(e_{a,x})$ for every $x, a=1,\cdots, n$. Then, for every $x$ and every sequence of complex numbers $(\alpha_a)_a$ such that $\sup_a|\alpha_a|\leq 1$, it holds that $$\Big\|\sum_a\alpha_a E_x^a\Big\|_{\mathcal B(H)}\leq 1.$$
  \end{lemma}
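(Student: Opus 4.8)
The plan is to reduce everything to a single column--operator norm estimate and then to absorb the complex coefficients by a diagonal sandwich. Write $t_{a,x}:=T(e_{a,x})$, so that $E_x^a=t_{a,x}^\dag t_{a,x}$ and $\sum_a\alpha_a E_x^a=\sum_a\alpha_a\,t_{a,x}^\dag t_{a,x}$. First I would treat the case $\alpha_a\equiv 1$. Fix $x$ and consider the element $\xi_x:=\sum_a e_a\otimes e_{a,x}\in C_n\otimes S_1^n$. Applying $\text{Id}_{C_n}\otimes T$ produces the column $\sum_a e_a\otimes t_{a,x}\in C_n\otimes_{min}\mathcal B(H)$, whose norm is, by the column--space formula, exactly $\big\|\sum_a t_{a,x}^\dag t_{a,x}\big\|^{1/2}=\big\|\sum_a E_x^a\big\|^{1/2}$. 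Since $\text{Id}_{C_n}$ has cb norm $1$ and $\|T\|_{cb}\le 1$, the metric mapping property makes $\text{Id}_{C_n}\otimes T$ a complete contraction for $\otimes_{min}$; hence it suffices to prove $\|\xi_x\|_{C_n\otimes_{min}S_1^n}\le 1$.

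To estimate $\|\xi_x\|$ I would pass to the Haagerup picture. Using $C_n\otimes_{min}S_1^n=C_n\otimes_h S_1^n$ together with $S_1^n=R_n\otimes_h C_n$, one gets $C_n\otimes_{min}S_1^n=C_n\otimes_h R_n\otimes_h C_n$ completely isometrically. Writing the matrix unit as $e_{a,x}=e_a\otimes e_x$ with $e_a\in R_n$ and $e_x\in C_n$, the fixed factor $e_x$ pulls out and $\xi_x=\big(\sum_a e_a\otimes e_a\big)\otimes e_x$. Under the identification $C_n\otimes_h R_n=S_\infty^n$ the first factor $\sum_a e_a\otimes e_a$ is the identity matrix, of norm $1$; since $\otimes_h$ is a cross norm we obtain $\|\xi_x\|\le \|I\|_{S_\infty^n}\,\|e_x\|_{C_n}=1$. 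This yields $\big\|\sum_a E_x^a\big\|\le 1$.

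For general coefficients I would observe that, letting $\mathrm{Col}_x$ denote the column operator $h\mapsto (t_{a,x}h)_a$ and $D_\alpha=\mathrm{diag}(\alpha_a)$, one has the factorization $\sum_a\alpha_a E_x^a=\mathrm{Col}_x^\dag\,(D_\alpha\otimes I_H)\,\mathrm{Col}_x$. Since $\|\mathrm{Col}_x\|^2=\big\|\sum_a E_x^a\big\|\le 1$ by the previous step and $\|D_\alpha\otimes I_H\|=\sup_a|\alpha_a|\le 1$, submultiplicativity of the operator norm gives $\big\|\sum_a\alpha_a E_x^a\big\|\le 1$, as desired. The only points requiring care are essentially bookkeeping: keeping track of which index lives in the row factor $R_n$ and which in the column factor $C_n$, and noting that although $\sum_a\alpha_a E_x^a$ is not literally a modulus square once the $\alpha_a$ are complex, the diagonal sandwich reduces the general case to the single estimate $\|\xi_x\|\le 1$, which is the crux of the argument.
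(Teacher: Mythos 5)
Your proof is correct, but it reaches the key inequality by a different mechanism than the paper. Both arguments have the same skeleton: first establish $\sum_a E_x^a\leq \uno_{\mathcal B(H)}$, then absorb the coefficients $(\alpha_a)_a$. For the first step the paper uses the identification $CB(S_1^n,\mathcal B(H))=S_\infty^n\otimes_{min}\mathcal B(H)$ to view $T$ as a single contraction $\hat{T}=\sum_{x,a}e_{a,x}\otimes T(e_{a,x})$ and reads $\sum_a E_x^a\leq \uno$ off the $(x,x)$ diagonal entry of $\hat{T}^\dag\hat{T}\leq \uno$; you instead exhibit the norm-one element $\xi_x=\sum_a e_a\otimes e_{a,x}\in C_n\otimes_{min}S_1^n$ and push it through $\mathrm{Id}\otimes T$, proving $\|\xi_x\|\leq 1$ by the chain $C_n\otimes_{min}S_1^n=C_n\otimes_h R_n\otimes_h C_n$ and the cross-norm property. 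These are two views of the same fact (your $\xi_x$ is precisely the $x$-th column of the identity, and the paper's argument is that a column of a contraction is a column contraction), but your derivation stays entirely at the tensor-norm level, whereas the paper's is a one-liner given the $CB$-duality. One remark: the ``bookkeeping'' you flag is genuinely load-bearing. The completely isometric identification $S_1^n=R_n\otimes_h C_n$ must send $e_{a,x}\mapsto e_a\otimes e_x$ with the \emph{first} index in the row factor (consistent with the paper's own use of $S_\infty^n=C_n\otimes_h R_n$, $e_{a,x}=e_a^C\otimes e_x^R$, in the proof of Lemma 3.5, by duality); with the opposite convention your element would read $\sum_a e_a\otimes e_x\otimes e_a\in C_n\otimes_h R_n\otimes_h C_n$ and the same computation would give $\sqrt{n}$ rather than $1$, so checking the convention is not optional — you got it right. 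For the second step, the paper notes that $e_a\mapsto E_x^a$ extends to a completely positive contraction $T_x:\ell_\infty^n\to\mathcal B(H)$; your sandwich $\sum_a\alpha_a E_x^a=\mathrm{Col}_x^\dag(D_\alpha\otimes I_H)\mathrm{Col}_x$ is exactly the Stinespring-type factorization witnessing that contractivity, so it is the same idea made explicit, with the small advantage of being elementary (no appeal to positivity theory) and handling complex $\alpha_a$ transparently.
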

  \begin{proof}
According to the isometric identifications $CB(S_1^n, \mathcal B(H))=S_\infty^n\otimes_{min}\mathcal B(H)=S_\infty(\ell_2^n\otimes_2\h)$, $T$ can be naturally identified with a contraction $\hat{T}=\sum_{x,a}e_{a,x}\otimes T(e_{a,x})$ so that $$\hat{T}^\dag\hat{T}=\sum_{x,x'}e_{x,x'}\otimes \Big(\sum_aT(e_{a,x})^\dag T(e_{a,x'})\Big)\leq \uno_{S_\infty^n\otimes_{min}\mathcal B(\h)}.$$ In particular, it must hold that $\sum_aE_x^a\leq \uno_{\mathcal B(H)}$ for every $x$. Moreover, since the elements $E_x^a$ are semidefinite positive, the previous condition implies that, for every $x$, the linear map $T_x:\ell_\infty^n\rightarrow \mathcal B(H)$, defined as $T_x(e_a)=E_x^a$ for every $a$, is a completely positive and (completely) contractive map. Thus, $\sum_a \alpha_aE_x^a=T_x(\sum_a\alpha_a e_a)$ is a contraction in $\mathcal B(H)$.
  \end{proof}
  
 The proof of the following lemma is inspired by the analysis of the entangled value of the CHSH$_q$ games conducted in \cite{Ba15}. 
  \begin{lemma}\label{lemma_upper_bound_eta_n}
  Let $n$ be a prime number and consider 
  \begin{align}\label{def_eta}
\eta_n=\sum_{\substack{x,y,a,b\in \mathbb Z_n
    \\ \hspace{-0.2 cm}a+b=xy}}e_{xyab,1}\otimes e_{a,x}\otimes e_{b,y}\in C_{n^4}\otimes S_1^n\otimes S_1^n.
    \end{align} Then,
    $$\|\eta_n\|_{C_{n^4}\otimes_{min} (S_1^n\otimes_{max} S_1^n)}\leq \sqrt{2}n^{\frac{3}{4}}.$$
  \end{lemma}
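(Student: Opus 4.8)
The plan is to compute the left-hand norm exactly as a supremum and then estimate it. Since $C_{n^4}\otimes_{\min}Z=C_{n^4}\otimes_h Z$ for any operator space $Z$, and the column norm of a vector of operators $\sum_k e_{k,1}\otimes M_k$ in $C_{n^4}\otimes_{\min}\mathcal B(H)$ equals $\|\sum_k M_k^\dag M_k\|^{1/2}$, I would first unfold the definition \eqref{Def_max_norm} of the max norm: a complete contraction $S_1^n\otimes_{\max}S_1^n\to\mathcal B(H)$ is exactly a map $T\odot S$ for complete contractions $T,S:S_1^n\to\mathcal B(H)$ with commuting ranges and commuting adjoints. Writing $E_x^a=T(e_{a,x})^\dag T(e_{a,x})$ and $F_y^b=S(e_{b,y})^\dag S(e_{b,y})$, the commutation of each $E_x^a$ with $S(e_{b,y})$ and $S(e_{b,y})^\dag$ lets the generic entry $\big(T(e_{a,x})S(e_{b,y})\big)^\dag T(e_{a,x})S(e_{b,y})$ collapse to $E_x^a F_y^b$. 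Hence
\[
\|\eta_n\|^2_{C_{n^4}\otimes_{\min}(S_1^n\otimes_{\max}S_1^n)}
=\sup_{T,S}\Big\|\sum_{\substack{x,y,a,b\in\mathbb Z_n\\ a+b=xy}}E_x^a F_y^b\Big\|,
\]
where the $E$'s commute with the $F$'s and, by Lemma~\ref{lemma_basic_1} (applied to $T$ and to $S$), any combination $\sum_a\alpha_a E_x^a$ or $\sum_b\beta_b F_y^b$ is a contraction whenever the coefficients have modulus at most $1$.

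Next I would diagonalize the constraint $a+b=xy$ by Fourier analysis on $\mathbb Z_n$. With $\omega=e^{2\pi i/n}$, using that $\frac1n\sum_{c\in\mathbb Z_n}\omega^{c(a+b-xy)}$ equals $1$ when $a+b=xy$ and $0$ otherwise, the sum rewrites as $\frac1n\sum_c\sum_{x,y}\omega^{-cxy}\,U_x^c V_y^c$, where $U_x^c=\sum_a\omega^{ca}E_x^a$ and $V_y^c=\sum_b\omega^{cb}F_y^b$. By the previous paragraph every $U_x^c$ and $V_y^c$ is a contraction, and the $U$'s still commute with the $V$'s.

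Then the heart of the argument is to control each character block $S_c=\sum_x U_x^c\,\widetilde V_x^c$, where $\widetilde V_x^c=\sum_y\omega^{-cxy}V_y^c$, via the operator Cauchy--Schwarz inequality $\big\|\sum_x A_xB_x\big\|\le\big\|\sum_x A_xA_x^\dag\big\|^{1/2}\big\|\sum_x B_x^\dag B_x\big\|^{1/2}$. The first factor is at most $\sqrt n$, since the $n$ operators $U_x^c$ are contractions. For the second factor and $c\neq0$, primality of $n$ yields $\sum_x\omega^{cx(y-y')}=n\,\delta_{y,y'}$, so that $\sum_x(\widetilde V_x^c)^\dag\widetilde V_x^c=n\sum_y(V_y^c)^\dag V_y^c$, of norm at most $n\cdot n=n^2$; hence $\|S_c\|\le\sqrt n\cdot n=n^{3/2}$ for each of the $n-1$ nonzero values of $c$. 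The block $c=0$ is handled separately: there $S_0=(\sum_x U_x^0)(\sum_y V_y^0)$ is a product of two positive operators of norm at most $n$, so $\tfrac1n\|S_0\|\le n$. Summing, $\big\|\sum_{a+b=xy}E_x^aF_y^b\big\|\le\tfrac1n(n-1)n^{3/2}+n\le n^{3/2}+n\le 2n^{3/2}$, which gives $\|\eta_n\|\le\sqrt2\,n^{3/4}$.

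The main obstacle is precisely the second Cauchy--Schwarz factor: the $n^2$ positive terms $E_x^aF_y^b$ only give the trivial bound $n^2$, and the improvement to $n^{3/2}$ rests entirely on the near-orthogonality produced by the phases $\omega^{-cxy}$, i.e.\ the Parseval identity $\sum_x(\widetilde V_x^c)^\dag\widetilde V_x^c=n\sum_y(V_y^c)^\dag V_y^c$, which is exactly where primality of $n$ is used. Some care is also needed to isolate the $c=0$ block, whose contribution $n$ is what forces the constant $\sqrt2$ rather than $1$.
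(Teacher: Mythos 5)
Your proof is correct and follows essentially the same route as the paper's: the same unfolding of the max norm and collapse via the commutation relations to $\big\|\sum_{a+b=xy}E_x^aF_y^b\big\|$, the same Fourier decomposition of the constraint $a+b=xy$ with Lemma \ref{lemma_basic_1} supplying contractivity of the twisted sums, and the same separate treatment of the zero-frequency block yielding the bound $n+(n-1)\sqrt{n}\leq 2n\sqrt{n}$. The only cosmetic difference is the final block estimate: you use operator Cauchy--Schwarz together with the Parseval identity $\sum_x(\widetilde V_x^c)^\dag\widetilde V_x^c=n\sum_y(V_y^c)^\dag V_y^c$, whereas the paper pairs against unit vectors and uses $\|\Phi_k\|=\sqrt{n}$ for the Fourier matrix $\Phi_k=(\omega^{-kxy})_{x,y}$ --- the same $\sqrt{n}$-gain from primality, packaged at the operator level rather than the vector-state level.
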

  \begin{proof}
 According to Equation (\ref{Def_max_norm}), we must show that for every pair of complete contractions $T:S_1^n\rightarrow \mathcal B(\h)$ and $S:S_1^n\rightarrow \mathcal B(\h)$ satisfying  $[T(x), S(y)]=[T(x), S(y)^\dag]=0$ for every $x,y\in S_1^n$, we have 
 \begin{align*}
 \|(\text{Id}\otimes T\odot S)(\eta_n)\|^2_{C_{n^4}\otimes_{min} (S_1^n\otimes_{max} S_1^n)}&\leq 2n\sqrt{n}. 
    \end{align*}
  
Then, for any such pair of complete contractions, it holds that 
 \begin{align*}
 \|(\text{Id}\otimes T\odot S)(\eta_n)\|^2_{C_{n^4}\otimes_{min} (S_1^n\otimes_{max} S_1^n)}&=\Big\|\sum_{\substack{x,y,a,b\in \mathbb Z_n
    \\ \hspace{-0.2 cm}a+b=xy}}e_{xyab,1}\otimes T(e_{a,x}) S(e_{b,y})\Big\|^2_{C_{n^4}\otimes_{min} \mathcal B(\h)} \\&
    =\Big\|\sum_{\substack{x,y,a,b\in \mathbb Z_n
    \\ \hspace{-0.2 cm}a+b=xy}}T(e_{a,x})^\dag T(e_{a,x}) S(e_{b,y})^\dag S(e_{b,y})\Big\|_{ \mathcal B(\h)},
\end{align*}where we have used the commutativity relations between $T$ and $S$.

If we denote $E_x^a=T(e_{a,x})^\dag T(e_{a,x})$, $F_y^b=S(e_{b,y})^\dag S(e_{b,y})$ and $\omega=e^{\frac{2\pi i}{n}}$, we can write the previous expression as 
 \begin{align*}
\Big\|\frac{1}{n}\sum_{k=0}^{n-1}\sum_{x,y,a,b\in \mathbb Z_n}\omega^{k (a+b-xy)}E_x^a F_y^b\Big\|_{ \mathcal B(\h)} =
    \frac{1}{n}\Big\|\sum_{k=0}^{n-1}\sum_{x,y\in \mathbb Z_n}\omega^{-kxy}\Big(\sum_a \omega^{ka}E_x^a\Big) \Big(\sum_b \omega^{kb}F_y^b\Big)\Big\|_{\mathcal B(\h)}. 
\end{align*}

According to Lemma \ref{lemma_basic_1}, the complete contractivity of $T$ and $S$ guarantees that $T_x^k:=\sum_a \omega^{ka}E_x^a$ and $S_y^k:=\sum_b \omega^{kb}F_y^b$ are contractions in $ \mathcal B(\h)$ for every $x$, $y$, $k$. 
On the other hand, for every norm-one vectors $\xi,\, \eta \in H$, we have 
\begin{align*}
\frac{1}{n}\sum_{k=0}^{n-1}\sum_{x,y\in \mathbb Z_n}\omega^{-kxy}\langle \eta, (T_x^k S_y^k)(\xi)\rangle =\frac{1}{n}\sum_{k=0}^{n-1}\langle v_k,(\text{Id}\otimes \Phi_k)(u_k)\rangle,  
\end{align*}where $u_k=\sum_y S_y^k(\xi)\otimes e_y\in \h \otimes \ell_2^n$, $v_k=\sum_x(T_x^k)^\dag(\eta)\otimes e_x\in \h \otimes \ell_2^n$ and $\Phi_k=(\omega^{-kxy})_{x,y}\in M_n$.

Now, it is clear that $\|u_k\|\leq \sqrt{n}$ and $\|v_k\|\leq \sqrt{n}$ for every $k$. Also, $\|\Phi_k\|=\sqrt{n}$ for every $k\neq 0$ (note that $(1/\sqrt{n})\Phi_k$ is a unitary matrix) and $\|\Phi_0\|=n$. Hence, we obtain 
\begin{align*}
\frac{1}{n}\sum_{k=0}^{n-1}\langle v_k,(\text{Id}\otimes \Phi_k)(u_k)\rangle \leq n+ (n-1)\sqrt{n}\leq 2n\sqrt{n}.
\end{align*}
  \end{proof}
  
The following lemma will be very useful in  order to prove the lower bound in Theorem \ref{Thm_Main_I}.
\begin{lemma}\label{lemma_basic_2}
For any natural number $n\geq 2$, let's consider a function $f:\mathbb Z_n^4\rightarrow \{0,1\}$ such that for every $x$, $y$, there exists a bijection $\pi_{x,y}:\mathbb Z_n\rightarrow \mathbb Z_n$ such that $f(x,y,a,b)=1$ if and only if $\pi_{x,y}(a)=b$. Then, 
$$\xi_f=\sum_{x,y,a,b\in \mathbb Z_n}f(x,y,a,b)\,  e_{xyab}\otimes e_{a,x}\otimes e_{b,y}$$satisfies 
$$\|\xi_f\|_{R_{n^4}\otimes_h(S_\infty^n\otimes_h S_\infty^n)}= {n^2}.$$ 

The same estimate applies to the element $$\xi^T_f=\sum_{x,y,a,b\in \mathbb Z_n}f(x,y,a,b)\,  e_{xyab}\otimes e_{b,y}\otimes e_{a,x}.$$
 \end{lemma}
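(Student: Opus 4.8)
The plan is to compute the norm \emph{exactly} by peeling off the four indices $y,b,x,a$ one at a time, alternating between Lemma~\ref{lemma_haag_P_1} (which turns a matching pair of \emph{row} legs into an $\ell_2$-sum) and Lemma~\ref{lemma_haag_P_2} (which turns a row/column pair into an $\ell_1$-sum). Because every identification I use is a complete isometry and the two lemmas are equalities, this produces the value $n^2$ directly, with no need for separate upper and lower bounds.

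First I would unfold the two middle factors. Using $S_\infty^n=C_n\otimes_h R_n$ with $e_{a,x}=e_a^{C}\otimes e_x^{R}$ and $e_{b,y}=e_b^{C}\otimes e_y^{R}$, associativity gives the completely isometric identification
\[
R_{n^4}\otimes_h(S_\infty^n\otimes_h S_\infty^n)=R_{n^4}\otimes_h C_n^{(a)}\otimes_h R_n^{(x)}\otimes_h C_n^{(b)}\otimes_h R_n^{(y)}.
\]
I would also relabel the outer row space by a reordered tuple, $R_{n^4}=R_n^{(y)}\otimes_h R_n^{(b)}\otimes_h R_n^{(x)}\otimes_h R_n^{(a)}$, writing $e_{xyab}=e_y^R\otimes e_b^R\otimes e_x^R\otimes e_a^R$ (any reordering of an orthonormal basis is a complete isometry of $R_{n^4}$). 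The point of this bookkeeping is that each of $x,y,a,b$ now occurs in exactly two legs of the Haagerup product, and the matching legs can always be brought to the two \emph{ends} by associativity alone, without ever commuting a row past a column.

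Then I would peel the indices from the outside in. Splitting off $R_n^{(y)}$ at the front produces the shape $e_y^R\otimes(\cdots)\otimes e_y^R$, so Lemma~\ref{lemma_haag_P_1} gives $\|\xi_f\|=(\sum_y\|X_y\|^2)^{1/2}$ with $X_y=\sum_{x,a,b}f\,e^R_{(b,x,a)}\otimes e_a^C\otimes e_x^R\otimes e_b^C$. This inner element ends in the \emph{column} leg $C_n^{(b)}$, so splitting off $R_n^{(b)}$ at the front and applying Lemma~\ref{lemma_haag_P_2} gives $\|X_y\|=\sum_b\|Y_{y,b}\|$ with $Y_{y,b}=\sum_{x,a}f\,e^R_{(x,a)}\otimes e_a^C\otimes e_x^R$. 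This in turn ends in the \emph{row} leg $R_n^{(x)}$, so splitting off $R_n^{(x)}$ and applying Lemma~\ref{lemma_haag_P_1} again gives $\|Y_{y,b}\|=(\sum_x\|Z_{x,y,b}\|^2)^{1/2}$ with $Z_{x,y,b}=\sum_a f\,e_a^R\otimes e_a^C\in R_n\otimes_h C_n=S_1^n$.

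The permutation hypothesis is exactly what closes the computation. For fixed $(x,y,b)$ there is precisely one $a$ with $f(x,y,a,b)=1$, namely $a=\pi_{x,y}^{-1}(b)$, so $Z_{x,y,b}$ is a single matrix unit of $S_1^n$ with $\|Z_{x,y,b}\|=1$; hence $\|Y_{y,b}\|=\sqrt n$ for every $(y,b)$, then $\|X_y\|=n\sqrt n=n^{3/2}$ by summing over the $n$ values of $b$, and finally $\|\xi_f\|=(\sum_y n^{3})^{1/2}=n^2$. The statement for $\xi_f^T$ follows by the symmetric argument interchanging the two $S_\infty^n$ legs, the bijectivity of each $\pi_{x,y}$ (used through both $\pi_{x,y}$ and its inverse) making the counting identical. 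The only genuine obstacle is organizational: one must decide, for each index, whether its two legs form a row/row pair (forcing Lemma~\ref{lemma_haag_P_1} and an $\ell_2$-sum) or a row/column pair (forcing Lemma~\ref{lemma_haag_P_2} and an $\ell_1$-sum), and choose the relabeling of $R_{n^4}$ accordingly. Once the alternation $\ell_2,\ell_1,\ell_2$ over $y,b,x$ is identified, the bijection collapses the innermost $a$-leg to a norm-one matrix unit and the numerology $\sqrt n\cdot n\cdot\sqrt n=n^2$ is forced.
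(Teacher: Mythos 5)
Your proposal is correct and takes essentially the same route as the paper's own proof: the identical unfolding $S_\infty^n=C_n\otimes_h R_n$, the identical peeling order $y$, $b$, $x$ alternating Lemma~\ref{lemma_haag_P_1} ($\ell_2$-sums) and Lemma~\ref{lemma_haag_P_2} ($\ell_1$-sum), with the bijection hypothesis collapsing the innermost $a$-sum to a single norm-one matrix unit in $S_1^n$ and the same numerology $\sqrt{n}\cdot n\cdot\sqrt{n}=n^2$, and the same symmetric treatment of $\xi_f^T$. The only cosmetic difference is that you make explicit the reordering of the outer factor $R_{n^4}$, which the paper performs implicitly by invoking the homogeneity of the row operator space structure.
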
 
 \begin{proof}
 The homogeneity of the row operator spaces structure, together with the associativity of the Haagerup tensor norm and the completely isometric identification $S_\infty^n=C_n\otimes_hR_n $, allow us to rearrange the tensors $e_{xyab}=e_x\otimes e_y\otimes e_a\otimes e_b$ to write 
 $$\|\xi_f\|_{R_{n^4}\otimes_h(S_\infty^n\otimes_h S_\infty^n)}=\Big\|\sum_{y}e_y\otimes \xi_y\otimes e_{y}\Big\|_{R_{n}\otimes_h Y\otimes_hR_n},$$where $Y=R_{n^3}\otimes_hS_\infty^n\otimes_hC_n$ and $$\xi_y=\sum_{x,a,b}f(x,y,a,b)e_{xab}\otimes e_{a,x}\otimes e_b\in Y.$$
 
According to Lemma \ref{lemma_haag_P_1}, we have 
  $$\|\xi_f\|_{R_{n^4}\otimes_h(S_\infty^n\otimes_h S_\infty^n)}=\Big(\sum_y \|\xi_y\|_Y^2\Big)^{\frac{1}{2}}.$$
  
  In addition, for any fixed $y$ we can again use the properties of the Haagerup tensor norm to write 
  $$\|\xi_y\|_Y=\Big\|\sum_{b}e_b\otimes \xi_{y,b}\otimes e_{b}\Big\|_{R_{n}\otimes_h Z\otimes_hC_n},$$where $Z=R_{{n^2}}\otimes_hS_\infty^n$ and 
  $$\xi_{y,b}=\sum_{x,a}f(x,y,a,b)e_{xa}\otimes e_{a,x}\in Z.$$
  
 Now, according to Lemma \ref{lemma_haag_P_2}, we have 
   $$\|\xi_y\|_Y=\sum_b \Big\|\sum_{x,a}f(x,y,a,b)e_{xa}\otimes e_{a,x}\Big\|_Z.$$
   
 Finally, by once again invoking Lemma \ref{lemma_haag_P_1}, we find that for ever $y$ and $b$, we have 
\begin{align*}
\Big\|\sum_{x,a}f(x,y,a,b)e_{xa}\otimes e_{a,x}\Big\|_Z&= \Big\|\sum_{x}e_x\otimes \Big(\sum_{a}f(x,y,a,b)e_a\otimes e_a\Big)\otimes e_x \Big\|_{R_n\otimes_h(S_1^n)\otimes_h R_n}\\&=\Big(\sum_x\Big\|\sum_af(x,y,a,b)e_a\otimes e_a\Big\|_{S_1^n}^2\Big)^{\frac{1}{2}}=\sqrt{n},
\end{align*}where the last equality follows from the properties of the function $f$.

It then follows that $\|\xi_f\|_{R_{n^4}\otimes_h(S_\infty^n\otimes_h S_\infty^n)}=n^2$ as we wanted.

The estimate for $\xi^T_f$ can be proved completely analogously by just interchanging $x$,$y$ and $a$,$b$.
 \end{proof}

We are now ready to prove Theorem \ref{Thm_Main_I}. To do so, we will use notation $\h_A=\h_B=\ell_2^n$ to indicate the order considered when taking the Haagerup tensor norm.
\begin{proof}[Proof of Theorem \ref{Thm_Main_I}]
 According to Lemma \ref{lemma_upper_bound_eta_n}, for any prime number $n$ the element 
  \begin{align*}
\eta_n=\sum_{\substack{x,y,a,b\in \mathbb Z_n
    \\ \hspace{-0.2 cm}a+b=xy}}e_{xyab,1}\otimes e_{a,x}\otimes e_{b,y}
    \end{align*} satisfies
    $$\|\eta_n\|_{C_{n^4}\otimes_{min} (S_1(H_A)\otimes_{max} S_1(H_B))}\leq \sqrt{2}n^{\frac{3}{4}}.$$
    
    On the other hand, according to Lemma \ref{lemma_basic_2} applied to the function $f(x,y,a,b)=\delta_{b,xy-a}$,  the element $$P_n=\frac{1}{{n^2}}\sum_{\substack{x,y,a,b\in \mathbb Z_n
    \\ \hspace{-0.2 cm}a+b=xy}}e_{1,xyab}\otimes e_{a,x}\otimes e_{b,y}$$satisfies 
\begin{align}\label{Norm_P}
\max\{\|P_n\|_{R_{n^4}\widehat{\otimes} (S_\infty(H_A)\otimes_h S_\infty(H_B))}, \|P_n^T\|_{R_{n^4}\widehat{\otimes} (S_\infty(H_B)\otimes_h S_\infty(H_A))} \}= 1.
\end{align}  

Hence, for any decomposition $\eta_n=\eta_{n,1}+\eta_{n,2}$ in $C_{n^4}\otimes S_1(H_A)\otimes S_1(H_B)$ we have that
\begin{align*}
n&=\langle \eta_n, P_n\rangle=\langle \eta_{n,1}, P_n\rangle+\langle \eta_{n,2}, P_n\rangle=\langle \eta_{n,1}, P_n\rangle+\langle \eta_{n,2}^T, P_n^T\rangle\\& \leq \|\eta_{n,1}\|_{C_{n^4}\otimes_{min} (S_1(H_A)\otimes_{h} S_1(H_B))}+\|\eta_{n,2}^T\|_{C_{n^4}\otimes_{min} (S_1(H_B)\otimes_{h} S_1(H_A))}.
\end{align*}
According to the definition of the $\mu$ norm (\ref{Def_mu_norm}), this concludes the proof. 
\end{proof}

\section{Proof of Theorem \ref{thm2}}

In this section we prove the following theorem, from which Theorem \ref{thm2} follows immediately.
\begin{theorem}\label{Thm_Main_II}
For every prime number $n$ there exists an element $\beta_n\in M_{n^6}\otimes \ell_1^{n^2}\otimes \ell_1^{n^2}$ such that $\|\eta_n\|_{S_\infty^{n^6}\otimes_{min} (\ell_1^{n^2}\otimes_{max} \ell_1^{n^2})}\leq \sqrt{2}n^{\frac{3}{4}}$ and $\|\eta_n\|_{S_\infty^{n^6}\otimes_{min} (\ell_1^{n^2}\otimes_{\mu} \ell_1^{n^2})}\geq  n$.
\end{theorem}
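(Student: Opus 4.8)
The plan is to transfer the two estimates of Theorem~\ref{Thm_Main_I}, which live in the noncommutative space $S_1^n$, to the commutative space $\ell_1^{n^2}$, using as a bridge the teleportation embedding of \cite{Ju16}. The idea is that the discrete Weyl system $\{W_{s,t}\}_{s,t\in\mathbb Z_n}$ encodes $S_1^n$ into the classical register $\ell_1^{n^2}$ (indexed by the $n^2$ Bell outcomes), at the price of moving the conjugate (bra) part and the auxiliary Weyl register of the teleported state into an enlarged ambient operator space. This is exactly why the column space $C_{n^4}$ of Theorem~\ref{Thm_Main_I} must be upgraded to the full matrix algebra $M_{n^6}=S_\infty^{n^6}$: teleportation produces a density-matrix-like (ket $\otimes$ bra) object, forcing a row factor alongside the column factor, while the $n^2$-dimensional Weyl register accounts for the growth $n^4\rightsquigarrow n^6$. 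First I would use this encoding on each of the two $S_1^n$ factors of $\eta_n$ to produce a completely explicit element $\beta_n\in M_{n^6}\otimes\ell_1^{n^2}\otimes\ell_1^{n^2}$, written in terms of the Weyl operators $W_{s,t}$ and the matrix units $e_{a,x}$.

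The upper bound is the subtle point, and here a \emph{naive} pushforward is doomed: since $CB(\ell_1^{n^2},\mathcal B(H))=\ell_\infty^{n^2}(\mathcal B(H))$, complete contractions out of $\ell_1^{n^2}$ form a far larger family (arbitrary tuples of individual contractions) than the row-contraction / sub-POVM families governing $S_1^n$, so simply transporting $\eta_n$ through a completely contractive map $S_1^n\to\ell_1^{n^2}$ would only increase the $\otimes_{max}$ norm. The teleportation embedding is instead engineered so that an arbitrary pair of commuting contractions $T,S:\ell_1^{n^2}\to\mathcal B(H)$ testing $\|\beta_n\|_{S_\infty^{n^6}\otimes_{min}(\ell_1^{n^2}\otimes_{max}\ell_1^{n^2})}$ gets conjugated by the Weyl operators sitting in the ambient $M_{n^6}$ and thereby reassembled into precisely the sub-POVM structure isolated in Lemma~\ref{lemma_basic_1}. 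I would show that evaluating $\beta_n$ against such $T,S$ reproduces the very expression bounded in Lemma~\ref{lemma_upper_bound_eta_n}, so that the Fourier/CHSH$_n$ computation there yields the same estimate $\|\beta_n\|\le\sqrt2\,n^{3/4}$.

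For the lower bound I would transport the dual witness $P_n$ of Theorem~\ref{Thm_Main_I} through the adjoint of the teleportation map (the teleportation \emph{decoding}), which sends the $S_\infty^n$ factors of $P_n$ into $\ell_\infty^{n^2}$ factors. Applying this to the function $f(x,y,a,b)=\delta_{b,xy-a}$ produces a commutative witness $Q_n$ in the predual of $S_\infty^{n^6}\otimes_{min}(\ell_1^{n^2}\otimes_h\ell_1^{n^2})$. The key point is that the teleportation embedding is isometric at the level of the relevant column/row Haagerup expressions, so that Lemmas~\ref{lemma_haag_P_1} and \ref{lemma_haag_P_2} apply verbatim with $S_\infty^n$ replaced by $\ell_\infty^{n^2}$ and give the same value $\max\{\|Q_n\|,\|Q_n^T\|\}=1$ as in (\ref{Norm_P}). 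Pairing then yields $\langle\beta_n,Q_n\rangle=n$, and for any splitting $\beta_n=\beta_{n,1}+\beta_{n,2}$ the definition (\ref{Def_mu_norm}) of the $\mu$-norm gives $\|\beta_n\|_{S_\infty^{n^6}\otimes_{min}(\ell_1^{n^2}\otimes_\mu\ell_1^{n^2})}\ge n$, exactly as in the proof of Theorem~\ref{Thm_Main_I}.

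The main obstacle is the compatibility and bookkeeping of the embedding: one must verify that a \emph{single} teleportation encoding is simultaneously isometric at the $\otimes_{max}$ level (controlling the upper bound) and at the $\otimes_h$/$\otimes_\mu$ level (controlling the dual witness $Q_n$), i.e.\ that commutifying $S_1^n$ into $\ell_1^{n^2}$ is free at the exact normalization. Making the identification between $C_{n^4}\otimes_{min}(S_1^n\otimes\cdots)$ and $S_\infty^{n^6}\otimes_{min}(\ell_1^{n^2}\otimes\cdots)$ explicit — tracking how the two conjugate sides and the two Weyl registers assemble into the single row and column factors of $M_{n^6}$, and checking that the Weyl conjugation converts the $\ell_1^{n^2}$ contractions back into the row-contraction structure of Lemma~\ref{lemma_basic_1} — is the technical heart of the argument, and is exactly where the noncommutative $L_p$-embeddings of \cite{Ju16} are invoked.
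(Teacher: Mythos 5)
Your plan is, in its architecture, exactly the paper's: you apply the teleportation encoding $J(\rho)=\frac1n\sum_{k,l}T_{k,l}^\dag\rho T_{k,l}\otimes e_{k,l}$ of \cite{Ju16} to both $S_1^n$ legs, set $\beta_n=(\text{Id}\otimes J\otimes J)(\eta_n)$, and your upper-bound mechanism is precisely Lemma \ref{lemma: lower bound theorem 2}: an arbitrary commuting pair $T,S:\ell_1^{n^2}\to\mathcal B(H)$, after the two ambient matrix legs are placed in separate tensor factors, composes with $J$ to produce complete contractions $\hat T,\hat S:S_1^n\to\mathcal B(\ell_2^n\otimes_2\ell_2^n\otimes_2 H)$ with $\dag$-commuting ranges, so $\|\beta_n\|_{max}\le\|\eta_n\|_{max}\le\sqrt2\,n^{3/4}$ with no need to redo the CHSH$_n$ computation. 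Your witness is likewise the paper's $Q_n$, pushed through the decoding $W$, with $\langle\beta_n,Q_n\rangle=n$ and the same $\mu$-norm duality as in Theorem \ref{Thm_Main_I}. (One motivational slip: by functoriality of $\otimes_{max}$, pushing $\eta_n$ through a genuinely completely contractive map $S_1^n\to\ell_1^{n^2}$ could only \emph{decrease} the max norm; the obstruction to a naive transfer is preserving the $\mu$-norm lower bound, not inflating the upper bound.)

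The genuine gap is in your verification that $\max\{\|Q_n\|,\|Q_n^T\|\}\le 1$. First, $W$ does not ``send the $S_\infty^n$ factors of $P_n$ into $\ell_\infty^{n^2}$ factors'': it maps $S_\infty^n\to S_\infty^n(\ell_\infty^{n^2})$, so $Q_n$ retains two residual $S_\infty^n$ legs --- dual to the two $S_\infty^n$ legs that $J\otimes J$ attaches to $\beta_n$, and the very reason $C_{n^4}$ grows to $M_{n^6}$. Consequently Lemmas \ref{lemma_haag_P_1} and \ref{lemma_haag_P_2} do not apply ``verbatim with $S_\infty^n$ replaced by $\ell_\infty^{n^2}$'': the norm to be bounded is the mixed one on $(R_{n^4}\widehat{\otimes} S_1^n\widehat{\otimes} S_1^n)\widehat{\otimes}(\ell_\infty^{n^2}\otimes_h\ell_\infty^{n^2})$, where the $S_1$ legs sit in projective (not Haagerup) position and are coupled to the $\ell_\infty$ legs through the Weyl sums, so $Q_n$ is not of the diagonal form those lemmas handle; nor is the encoding ``isometric'' at this level, and no such isometry is what the paper uses. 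The actual argument is functorial and needs two devices you never invoke: complete contractivity of $W$ gives $\|(\text{Id}\otimes W\otimes W)(P_n)\|\le 1$ in $R_{n^4}\widehat{\otimes}\big[(S_\infty^n\otimes_{min}\ell_\infty^{n^2})\otimes_h(S_\infty^n\otimes_{min}\ell_\infty^{n^2})\big]$ starting from (\ref{Norm_P}); Lemma \ref{Lemma_Aux_II} then extracts the two $S_\infty^n$ legs from the Haagerup product; and Equation (\ref{Id_com_int}), $\|\text{Id}/n:S_\infty^n\otimes_{min}X\to S_1^n\widehat{\otimes}X\|_{cb}=1$, applied twice, converts them into the required $S_1^n$ legs at total cost $n^2$ --- which is exactly why the witness must be normalized as $Q_n=\frac{1}{n^2}(\text{Id}\otimes W\otimes W)(P_n)$ in (\ref{Def Q}), a factor absent from your account. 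You rightly flagged the ``exact normalization'' as the technical heart, but the mechanism you propose for it (verbatim transfer of the Haagerup lemmas) neither disposes of the $S_\infty^n$ legs nor produces the $n^{-2}$; without Lemma \ref{Lemma_Aux_II} and (\ref{Id_com_int}) the claimed bound $\max\{\|Q_n\|,\|Q_n^T\|\}= 1$, and hence the lower bound $\|\beta_n\|_\mu\ge n$, is not established.
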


The strategy to prove Theorem \ref{thm2} consists of transforming the element  $\eta_n\in C\otimes S_1\otimes S_1$ defined in the proof of Theorem \ref{thm1}, into another element $\beta_n\in S_\infty\otimes \ell_1\otimes \ell_1$  such that the norms $max$ and $\mu$ are preserved. To do this, given $n$, let us consider the unitary matrices $X,\, Z\in M_n$, defined by $X(e_j)=e^{\frac{2\pi i j}{n}}e_j$ and $Z(e_j)=e_{j+1}$ respectively on the canonical vectors $(e_j)_{j=0}^{n-1}$ and denote $T_{k,l}=X^kZ^l$ for every $k,l=0,1, \ldots, n-1$.

The following lemma, whose proof is included for completeness, was proved in \cite{Ju16} and is inspired by the quantum teleportation protocol in quantum information theory.
\begin{lemma}\label{lem_telep}
Let $J:S_1^n\rightarrow S_\infty^n(\ell_1^{n^2})$ and $W:S_\infty^n\rightarrow  S_\infty^n(\ell_\infty^{n^2})$ be the linear maps defined, respectively, as $$J(\rho)= \frac{1}{n}\sum_{k,l=0}^{n-1}T_{k,l}^\dag\rho T_{k,l} \otimes e_{k,l}\hspace{0.3 cm} \text{and} \hspace{0.3 cm} W(\rho)=\sum_{k,l=0}^{n-1}T_{k,l}^T\rho \overline{T}_{k,l} \otimes e_{k,l}.$$

Then,  $J$ and $W$ are complete contractions.
\end{lemma}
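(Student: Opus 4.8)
The plan is to prove the two statements separately: the complete contractivity of $W$ is essentially immediate, while $J$ requires a Choi-matrix computation exploiting the orthogonality of the teleportation basis.

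\emph{The map $W$.} Since $T_{k,l}$ is unitary one checks that $\overline{T}_{k,l}=(T_{k,l}^T)^{-1}=(T_{k,l}^T)^\dag$, so each coordinate map $W_{k,l}\colon\rho\mapsto T_{k,l}^T\rho\,\overline{T}_{k,l}$ is conjugation by the unitary $T_{k,l}^T$, hence a complete isometry of $S_\infty^n$. The target decomposes as $S_\infty^n(\ell_\infty^{n^2})=S_\infty^n\otimes_{min}\ell_\infty^{n^2}=\bigoplus_\infty^{n^2}S_\infty^n$, an $\ell_\infty$-direct sum; since the cb-norm of a map into an $\ell_\infty$-direct sum is the supremum of the cb-norms of its coordinates, I conclude $\|W\|_{cb}=\sup_{k,l}\|W_{k,l}\|_{cb}=1$.

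\emph{The map $J$: reduction to a Choi tensor.} I would use the complete isometry $CB(S_1^n,Z)=S_\infty^n\otimes_{min}Z$ (valid in finite dimensions, from the duality identities recalled in the preliminaries) with $Z=S_\infty^n\otimes_{min}\ell_1^{n^2}$. Under this identification $\|J\|_{cb}=\|\widehat{J}\|_{min}$, where $\widehat{J}=\sum_{i,j}e_{i,j}\otimes J(e_{i,j})\in S_\infty^n\otimes_{min}S_\infty^n\otimes_{min}\ell_1^{n^2}$. Writing $\omega=\sum_i e_i\otimes e_i$ and using $\sum_{i,j}e_{i,j}\otimes e_{i,j}=|\omega\rangle\langle\omega|$, a short computation gives
$$\widehat{J}=\frac1n\sum_{k,l}(\text{Id}\otimes T_{k,l}^\dag)|\omega\rangle\langle\omega|(\text{Id}\otimes T_{k,l})\otimes e_{k,l}=\frac1n\sum_{k,l}|\omega_{k,l}\rangle\langle\omega_{k,l}|\otimes e_{k,l},$$
with $\omega_{k,l}=(\text{Id}\otimes T_{k,l}^\dag)\omega\in\ell_2^n\otimes_2\ell_2^n$.

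\emph{The map $J$: orthogonality and the $\ell_1$-norm.} The orthogonality relation $\tr(T_{k,l}T_{k',l'}^\dag)=n\,\delta_{k,k'}\delta_{l,l'}$ of the Weyl operators shows that $\langle\omega_{k,l},\omega_{k',l'}\rangle=n\,\delta_{k,k'}\delta_{l,l'}$, so $\{\tfrac1{\sqrt n}\omega_{k,l}\}_{k,l}$ is an orthonormal basis of $\ell_2^n\otimes_2\ell_2^n$ and $P_{k,l}:=\tfrac1n|\omega_{k,l}\rangle\langle\omega_{k,l}|$ are mutually orthogonal rank-one projections summing to the identity of $M_{n^2}$. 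Now I evaluate $\|\widehat{J}\|_{min}$ through the defining supremum of the min norm: complete contractions $\ell_1^{n^2}\to\mathcal B(K)$ are exactly tuples $(u_{k,l})$ with $\max_{k,l}\|u_{k,l}\|\le1$, and for any such tuple the operator $\sum_{k,l}P_{k,l}\otimes u_{k,l}$ is block-diagonal in the basis $\{\tfrac1{\sqrt n}\omega_{k,l}\}$, hence has norm $\max_{k,l}\|u_{k,l}\|\le1$. Therefore $\|\widehat{J}\|_{min}\le1$ and $J$ is a complete contraction.

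\emph{Main obstacle.} The delicate point is $J$, whose target is $\ell_1$-valued: unlike $W$, the cb-norm is not a supremum over coordinates, and the argument genuinely relies on the mutual orthogonality of the teleportation basis $\{\omega_{k,l}\}$ to linearize the $\ell_1$-norm into a maximum over blocks. Care is also needed to set up the representing (Choi) tensor with the correct operator-space duality convention, so that the Weyl-twisted maximally entangled states appear as mutually orthogonal projections rather than as the flip operator.
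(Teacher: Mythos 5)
Your proposal is correct, and for $J$ it takes a genuinely different route from the paper's. For $W$ the two arguments are essentially interchangeable: the paper simply observes that $W$ is unital and completely positive between $C^*$-algebras, while your $\ell_\infty$-direct-sum/coordinatewise argument is equally immediate (and does not even use positivity). For $J$, the paper works on the predual side: it factors $J=(P\otimes\text{Id})\circ i$, where $i(\rho)=\rho\otimes\phi$ and $P:S_1^{n^2}\rightarrow\ell_1^{n^2}$ is the diagonal compression onto the teleportation basis $(\eta_{k,l})$, and the complete contractivity of $i$ is extracted from the Haagerup-tensor shuffle estimate of Lemma \ref{Lemma_Aux_I}, $\|\text{Id}:S_1^n\hat{\otimes}(S_1^n\otimes_{min}S_\infty^n)\rightarrow S_1^{n^2}\otimes_{min}S_\infty^n\|_{cb}\leq 1$. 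You instead work on the dual side: you compute the Choi tensor $\widehat{J}=\frac{1}{n}\sum_{k,l}|\omega_{k,l}\rangle\langle\omega_{k,l}|\otimes e_{k,l}$ and evaluate $\|\widehat{J}\|_{min}$ directly, using that $\ell_1^{n^2}$ carries the maximal operator space structure (so complete contractions out of it are exactly tuples of contractions) and that the mutual orthogonality of the projections makes the image block-diagonal. The underlying mechanism is identical---the orthogonality of the Weyl-twisted maximally entangled vectors, which coincide with the paper's $\eta_{k,l}$ up to a transpose---but your argument bypasses Lemma \ref{Lemma_Aux_I} entirely, at the price of invoking two standard facts the paper's proof avoids at this point: the complete isometry $CB(S_1^n,Z)=M_n(Z)$ (which the paper does use, with the same ``parallel'' convention $\sum_{i,j}e_{i,j}\otimes J(e_{i,j})$, in Lemma \ref{lemma_basic_1}; your caution about the convention is well placed, since the transposed, trace-duality convention involves the transpose map, which is not a complete isometry), and $\ell_1^N=\max(\ell_1^N)$. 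A small bonus of your route is exactness: taking all $u_{k,l}=\uno$ gives $\|\widehat{J}\|_{min}=1$, so in fact $\|J\|_{cb}=1$, whereas the paper's factorization only yields the (sufficient) upper bound.
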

\begin{proof}
Since $W$ is a completely positive and unital map between $C^*$-algebras, we have $\|W\|_{cb}=1$.

In order to study the map $J$, let us define  $\eta_{k,l}=(T_{k,l}\otimes \text{Id})(\phi)\in \ell_2^{{n^2}}$ for every $k,l=0,\ldots , n-1$, where $\phi=(1/\sqrt{n})\sum_{j=0}^{n-1}e_j\otimes e_j$. It is very easy to check that these vectors form an orthonormal basis of $\ell_2^{{n^2}}$. Moreover, one can check that for every $h\in \ell_2^n$, the following identity holds $$h\otimes \sum_{i=0}^{n-1}e_i\otimes e_i=\frac{1}{\sqrt{n}}\sum_{k,l=0}^{n-1}\eta_{k,l}\otimes T_{k,l}^\dag(h).$$

This allows us to write, for every $\rho\in S_1^n$, the identity $$\rho\otimes \phi=\frac{1}{n}\sum_{k,l,k',l'=0}^{n-1}H_{k,l}^{k',l'}\otimes T_{k,l}^\dag\rho T_{k',l'},$$where $\phi=\sum_{i,j=0}^{n-1}e_{i,j}\otimes e_{i,j}$ and $H_{k,l}^{k',l'}:M_{{n^2}}\rightarrow M_{{n^2}}$ is the rank one operator defined as $H_{k,l}^{k',l'}(v)=\langle \eta_{k',l'}, v\rangle \eta_{k,l}$.

On the other hand, the fact that the vectors $\eta_{k,l}$, $k,l=0,1, \ldots, n-1$ form an orthonormal basis of $\ell_2^{{n^2}}$ guarantees that the projection $P:S_1^{{n^2}}\rightarrow \ell_1^{n^2}$, defined by $$P(A)=\sum_{k,l=0}^{n-1} \langle \eta_{k,l}, A(\eta_{k,l})\rangle e_{k,l},$$ is a complete contraction. According to Lemma \ref{Lemma_Aux_I}, the identity map $\text{Id}: S_1^n\hat{\otimes}(S_1^n\otimes_{min} S_\infty^n)\rightarrow  S_1^{{n^2}}\otimes_{min} S_\infty^n$ is completely contractive. It then follows that the map $i:S_1^n\rightarrow S_1^{{n^2}}\otimes_{min} S_\infty^n$, defined by $i(\rho)=\rho\otimes \phi$, is automatically completely contractive and the complete contractivity of $J$ follows from the identity $J=(P\otimes \text{Id})\circ i$.
\end{proof}

Lemma \ref{lem_telep} is the key point to transfer the statement of Theorem  \ref{Thm_Main_I} on non-commutative $L_1$-spaces into a statement involving commutative ones. Indeed, if $\eta_n$ is the element defined in (\ref{def_eta}), then we define $\beta_n=(\text{Id}\otimes J\otimes J)(\eta_n)$, so that
\begin{align}\label{Def beta}
\beta_n=\frac{1}{n^2}\sum_{\substack{x,y,a,b\in \mathbb Z_n
    \\ \hspace{-0.2 cm}a+b=xy\\ k,l,k',l'\in \mathbb Z_n}}e_{xyab,1}\otimes R_{a,x}^{k,l}\otimes R_{b,y}^{k',l'}\otimes e_{k,l}\otimes e_{k',l'} \in C_{n^4}\otimes S_\infty^n\otimes S_\infty^n\otimes (\ell_1^{n^2}\otimes \ell_1^{n^2}),
    \end{align}where we have denoted $R_{a,x}^{k,l}=T_{k,l}^\dag e_{a,x} T_{k,l}$ for every $a,x,k,l$.
    
 Note that the complete contractivity of $J$ along with the commutative and associative properties of the min norm and the metric mapping property immediately imply that 
 $$\|\beta_n\|_{C_{n^4}\otimes_{min} S_\infty^{n^2}\otimes_{min} (\ell_1^{n^2}\otimes_{min} \ell_1^{n^2})}\leq \|\eta_n\|_{C_{n^4}\otimes_{min} (S_1^n\otimes_{min} S_1^n)}.$$
 
 The next lemma shows that we can actually upper bound the max norm of $\beta_n$. 
 \begin{lemma}\label{lemma: lower bound theorem 2}
Given any natural number $n$, we have
 $$\|\beta_n\|_{C_{n^4}\otimes_{min} S_\infty^{n^2}\otimes_{min} (\ell_1^{n^2}\otimes_{max} \ell_1^{n^2})}\leq \|\eta_n\|_{C_{n^4}\otimes_{min} (S_1^n\otimes_{max} S_1^n)}.$$
 \end{lemma}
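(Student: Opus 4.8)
The plan is to realize $\beta_n=(\text{Id}\otimes J\otimes J)(\eta_n)$ as the image of $\eta_n$ under a complete contraction, so that the bound is inherited directly from the max norm of $\eta_n$. Unwinding the target norm by means of the definition \eqref{Def_max_norm} together with the injectivity of $\otimes_{min}$, the quantity $\|\beta_n\|_{C_{n^4}\otimes_{min}S_\infty^{n^2}\otimes_{min}(\ell_1^{n^2}\otimes_{max}\ell_1^{n^2})}$ equals the supremum, over all Hilbert spaces $H$ and all complete contractions $\tilde T,\tilde S:\ell_1^{n^2}\to\mathcal B(H)$ with $[\tilde T(e_{k,l}),\tilde S(e_{k',l'})]=[\tilde T(e_{k,l}),\tilde S(e_{k',l'})^\dag]=0$, of
\[\big\|(\text{Id}\otimes\text{Id}_{S_\infty^n}\otimes\text{Id}_{S_\infty^n}\otimes\tilde T\odot\tilde S)(\beta_n)\big\|_{C_{n^4}\otimes_{min}\mathcal B(\ell_2^n\otimes\ell_2^n\otimes H)}.\]
Here the two $S_\infty^n=S_\infty^n$ legs (i.e. $S_\infty^{n^2}=S_\infty^n\otimes_{min}S_\infty^n$) have been absorbed into $\mathcal B(\ell_2^n\otimes\ell_2^n\otimes H)$, and the $\ell_1^{n^2}\otimes_{max}\ell_1^{n^2}$ factor is tested through pairs of commuting-range contractions, as dictated by the max norm.

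The heart of the argument is to convert each such test pair for $\ell_1^{n^2}\otimes_{max}\ell_1^{n^2}$ into a test pair for $S_1^n\otimes_{max}S_1^n$. Writing $U_{k,l}=\tilde T(e_{k,l})$, $V_{k,l}=\tilde S(e_{k,l})$ and $\tilde H=\ell_2^n\otimes\ell_2^n\otimes H$, I would define $T,S:S_1^n\to\mathcal B(\tilde H)$ by
\[T(e_{a,x})=\frac1n\sum_{k,l}R_{a,x}^{k,l}\otimes\text{Id}\otimes U_{k,l},\qquad S(e_{b,y})=\frac1n\sum_{k,l}\text{Id}\otimes R_{b,y}^{k,l}\otimes V_{k,l}.\]
The decisive observation is that $T$ factors as $J$, followed by $\text{Id}_{S_\infty^n}\otimes\tilde T$, followed by the leg-insertion $\mathcal B(\ell_2^n\otimes H)\hookrightarrow\mathcal B(\tilde H)$ placing the identity on the middle factor: the first map is a complete contraction by Lemma \ref{lem_telep}, the second by the metric mapping property of $\otimes_{min}$, and the third is a unital $*$-homomorphism, hence a complete isometry. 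Thus $T$, and symmetrically $S$, are complete contractions. Moreover $T$ and $S$ have commuting ranges, since their $S_\infty^n$ parts occupy disjoint tensor legs and therefore commute, while $U_{k,l}$ commutes with $V_{k',l'}$ together with its adjoint by hypothesis, giving $[T(e_{a,x}),S(e_{b,y})]=[T(e_{a,x}),S(e_{b,y})^\dag]=0$.

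It then remains to record the matching identity
\[(\text{Id}\otimes T\odot S)(\eta_n)=(\text{Id}\otimes\text{Id}_{S_\infty^n}\otimes\text{Id}_{S_\infty^n}\otimes\tilde T\odot\tilde S)(\beta_n)\]
as an equality of elements of $C_{n^4}\otimes_{min}\mathcal B(\tilde H)$. This is a direct computation: expanding the product $T(e_{a,x})S(e_{b,y})$ and using that $R_{a,x}^{k,l}\otimes\text{Id}$ commutes with $\text{Id}\otimes R_{b,y}^{k',l'}$ and that $U_{k,l}$ commutes with $V_{k',l'}$ reproduces exactly the summand $\tfrac{1}{n^2}\,R_{a,x}^{k,l}\otimes R_{b,y}^{k',l'}\otimes U_{k,l}V_{k',l'}$ appearing in the definition \eqref{Def beta} of $\beta_n$. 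Since $T$ and $S$ are complete contractions with commuting ranges, the left-hand side is bounded by $\|\eta_n\|_{C_{n^4}\otimes_{min}(S_1^n\otimes_{max}S_1^n)}$ via \eqref{Def_max_norm}, and taking the supremum over all admissible $\tilde T,\tilde S$ yields the lemma.

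I expect the main obstacle to be the first verification, namely the complete contractivity of $T$ and $S$: this is precisely the point at which the teleportation map $J$ and Lemma \ref{lem_telep} are indispensable, because an arbitrary pair $U_{k,l},V_{k,l}$ of contractions on $H$ would not assemble into contractive maps on $S_1^n$ without the averaging $\tfrac1n\sum_{k,l}T_{k,l}^\dag(\cdot)T_{k,l}$ built into $J$. Once this is in place, the commutation relations and the matching identity are routine bookkeeping over the three tensor legs.
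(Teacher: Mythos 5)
Your proof is correct and takes essentially the same route as the paper: your maps $T(e_{a,x})=\frac{1}{n}\sum_{k,l}R_{a,x}^{k,l}\otimes\text{Id}\otimes U_{k,l}$ and $S(e_{b,y})=\frac{1}{n}\sum_{k,l}\text{Id}\otimes R_{b,y}^{k,l}\otimes V_{k,l}$ are precisely the paper's $\hat{T}$ and $\hat{S}$, obtained by composing the teleportation map $J$ of Lemma \ref{lem_telep} with the leg-insertion of the given commuting contractions into disjoint matrix tensor factors. Your verification of complete contractivity (via Lemma \ref{lem_telep}, the metric mapping property of $\otimes_{min}$, and the amplification being a $*$-homomorphism), of the commutation relations including adjoints, and of the matching identity $(\text{Id}\otimes T\odot S)(\eta_n)=(\text{Id}\otimes\tilde{T}\odot\tilde{S})(\beta_n)$ matches the paper's argument step for step.
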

 \begin{proof}
 Given any pair of (complete) contractions $T,S: \ell_1^{n^2}\rightarrow  \mathcal B(\h)$ such that $[T(e_{k,l}), S(e_{k',l'})]=[T(e_{k,l}), S(e_{k',l'})^\dag]=0$ for every $k,l,k',l'$, we define the linear maps $$\tilde{T}, \tilde{S}:M_n(\ell_1^{n^2})\rightarrow M_n\otimes_{min}M_n\otimes_{min} \mathcal B(\h)=\mathcal B(\ell_2^n\otimes_2 \ell_2^n\otimes_2\h),$$as $\tilde{T}\Big(\sum_{k,l}A_{k,l}\otimes e_{k,l}\Big)=\sum_{k,l}A_{k,l}\otimes \uno\otimes T(e_{k,l})$ and $\tilde{S}\Big(\sum_{k,l}B_{k,l}\otimes e_{k,l}\Big)=\sum_{k,l}\uno\otimes B_{k,l}\otimes S(e_{k,l})$. 
 
 Since $\tilde{T}$ and $\tilde{S}$ are clearly complete contractions, we deduce from Lemma \ref{lem_telep} that the maps $$\hat{T},\hat{S}:S_1^n\rightarrow \mathcal B(\ell_2^n\otimes_2 \ell_2^n\otimes_2\h),$$ defined as $\hat{T}=\tilde{T}\otimes J$ and $\hat{S}=\tilde{S}\otimes J$ respectively, are also completely contractive.  Moreover, it is clear that $[\hat{T}(\rho), \hat{S}(\gamma)]=[\hat{T}(\rho), \hat{S}(\gamma)^{\dag}]=0$ for every $\rho,\, \gamma\in S_1^n$. Then, we conclude our proof by noticing that
 \begin{align*}
& \|(\text{Id}_{C_{n^4}\otimes S_\infty^{n^2}}\otimes T\odot S)(\beta_n)\|_{C_{n^4}\otimes_{min} S_\infty^{n}\otimes_{min} S_\infty^{n}\otimes_{min} \mathcal B(\h)}\\&= \|(\text{Id}_{C_{n^4}}\otimes \hat{T}\odot \hat{S})(\eta_n)\|_{C_{n^4}\otimes_{min} \mathcal B(\ell_2^n\otimes_2 \ell_2^n\otimes_2\h)}\\&\leq \|\eta_n\|_{C_{n^4}\otimes_{min} (S_1^n\otimes_{max} S_1^n)}.
 \end{align*}
\end{proof}

We are now ready to prove the main result of the paper. It's very useful to denote  $\h_A=\h_B=\ell_2^n$ as well as $\ell_\infty^{{n_A}^2}=\ell_\infty^{{n_B}^2}=\ell_\infty^{{n}^2}$ to precisely indicate the rearrangement in the various steps of the proof.
\begin{proof}[Proof of Theorem \ref{thm2}]

On the one hand, according to Lemma \ref{lemma: lower bound theorem 2}, we have that 
 $$\|\beta_n\|_{C_{n^4}\otimes_{min} S_\infty(H_A)\otimes_{min}S_\infty(H_B)\otimes_{min} (\ell_1^{n_A^2}\otimes_{max} \ell_1^{n_B^2})}\leq \|\eta_n\|_{C_{n^4}\otimes_{min} (S_1(H_A)\otimes_{max} S_1(H_B))}\leq \sqrt{2}n^{\frac{3}{4}}.$$

On the other hand, using the notation of Lemma \ref{lem_telep}, let us consider the element 
\begin{align}\label{Def Q}
Q_n=\frac{1}{n^2}(\text{Id}_{R_{n^4}}\otimes W\otimes W)(P_n)=\frac{1}{n^4}\sum_{\substack{x,y,a,b\in \mathbb Z_n
    \\ \hspace{-0.2 cm}a+b=xy\\ k,l,k',l'\in \mathbb Z_n}}e_{1,xyab}\otimes S_{a,x}^{k,l}\otimes S_{b,y}^{k',l'}\otimes e_{k,l}\otimes e_{k',l'},
    \end{align}in $R_{n^4}\otimes S_\infty(H_A)\otimes S_\infty(H_B) \otimes (\ell_\infty^{n_A^2}\otimes \ell_\infty^{n_B^2})$, where we have denoted $S_{a,x}^{k,l}=T_{k,l}^T e_{a,x} \overline{T}_{k,l}$ for every $a,x,k,l$.
    
 Since $\langle \beta_n , Q_n\rangle=\langle \eta_n, P_n\rangle=n$, we can conclude the proof of the theorem in the same way as the proof of Theorem \ref{thm1}, by simply demonstrating that
\begin{align*}
\max\left\{\|Q_n\|_{(R_{n^4}\widehat{\otimes} S_1(H_A)\widehat{\otimes} S_1(H_B))\widehat{\otimes}(\ell_\infty^{{n_A}^2}\otimes_h\ell_\infty^{{n_B}^2})}, \|Q_n^T\|_{(R_{n^4}\widehat{\otimes} S_1(H_A)\widehat{\otimes} S_1(H_B))\widehat{\otimes}(\ell_\infty^{{n_B}^2}\otimes_h\ell_\infty^{{n_A}^2})} \right\}\leq 1.
\end{align*} 

In order to prove the inequality $$\|Q_n\|_{(R_{n^4}\widehat{\otimes} S_1(H_A)\widehat{\otimes} S_1(H_B))\widehat{\otimes}(\ell_\infty^{{n_A}^2}\otimes_h\ell_\infty^{{n_B}^2})}\leq 1,$$ we used that $\|P_n\|_{R_{n^4}\widehat{\otimes} (S_\infty(H_A)\otimes_h S_\infty(H_B))}\leq 1$ and the fact that $W$ is a complete contraction to guarantee that $$\|(\text{Id}_{R_{n^4}}\otimes W\otimes W)(P_n)\|_{R_{n^4}\widehat{\otimes} \big[(S_\infty(H_A)\otimes_{min}\ell_\infty^{{n_A}^2})\otimes_h (S_\infty(H_B)\otimes_{min}\ell_\infty^{{n_B}^2})\big]}\leq 1.$$

Now, according to the first property proved in Lemma \ref{Lemma_Aux_II} applied to $X=S_\infty(H_A)\otimes_{min}\ell_\infty^{{n_A}^2}$ and $Y=\ell_\infty^{{n_B}^2}$, the previous estimate implies that
$$\|(\text{Id}_{R_{n^4}}\otimes W\otimes W)(P_n)\|_{R_{n^4}\widehat{\otimes} \Big[S_\infty(H_B)\otimes_{min}\big(\big(S_\infty(H_A)\otimes_{min}\ell_\infty^{{n_A}^2}\big)\otimes_h \ell_\infty^{{n_B}^2}\big)\Big]}\leq 1,$$where the terms must be rearranged according to the map $T$ in Lemma \ref{Lemma_Aux_II}.

We can now apply the second property proven in Lemma \ref{Lemma_Aux_II} to $X=\ell_\infty^{{n_A}^2}$ and $Y=\ell_\infty^{{n_B}^2}$ to conclude that the previous estimate implies that
$$\|(\text{Id}_{R_{n^4}}\otimes W\otimes W)(P_n)\|_{R_{n^4}\widehat{\otimes} \Big[S_\infty(H_B)\otimes_{min}S_\infty(H_A)\otimes_{min}\big(\ell_\infty^{{n_A}^2}\otimes_h \ell_\infty^{{n_B}^2}\big)\Big]}\leq 1.$$

Finally, by using that $S_\infty(H_A)=S_\infty(H_B)=S_\infty^n$ and Equation (\ref{Id_com_int}), we can conclude that 
$$\|Q_n\|_{R_{n^4}\widehat{\otimes} S_1(H_B)\widehat{\otimes}S_1(H_A)\widehat{\otimes}\Big(\ell_\infty^{{n_A}^2}\otimes_h \ell_\infty^{{n_B}^2}\Big)}\leq 1.$$Note that the commutativity of the projective tensor norms implies that 
$$\|Q_n\|_{R_{n^4}\widehat{\otimes} S_1(H_A)\widehat{\otimes}S_1(H_B)\widehat{\otimes}\Big(\ell_\infty^{{n_A}^2}\otimes_h \ell_\infty^{{n_B}^2}\Big)}\leq 1$$as we wanted.

Given that $\|P_n^T\|_{R_{n^4}\widehat{\otimes} (S_\infty(H_B)\otimes_h S_\infty(H_A))} \leq 1$, employing the same argument enables us to deduce that $$\|Q_n^T\|_{R_{n^4}\widehat{\otimes} S_1(H_A)\widehat{\otimes}S_1(H_B)\widehat{\otimes}\Big(\ell_\infty^{{n_B}^2}\otimes_h \ell_\infty^{{n_A}^2}\Big)}\leq 1.$$

This concludes the proof.
\end{proof}

\begin{remark}\label{rem_commuting}
Let $(x_i)_i$ and $(y_j)_j$ be two families of contractions in $\mathcal B(H)$ such that $[x_i,y_j]=0$ for every $i,j$. If we assume, in addition, the elements $x_i$'s to be normal, it follows from Fuglede's Theorem \cite{Fu50} that $[x_i,y_j^\dag]=0$ for every $i,j$.  According to Russo-Dye Theorem (or by just using twice the standard block encoding; that is, given any contraction $T\in \mathcal B(H)$, we consider the unitary 
 \[ U \lel\left( \begin{array}{cc} T & -\sqrt{1-TT^\dag}   \\
  \sqrt{1-T^\dag T} & T^\dag \end{array} \right)\in M_2 (\mathcal B(H))), \] we obtain that for every family of matrices $(a_{ij})_{i,j}\in M_n$, it holds that
 \[ \Big\|\sum_{i,j} a_{ij}\otimes x_i y_j \Big\|_{M_n(\mathcal B(H))}
 \kl \sup \Big\|\sum_{i,j} a_{ij} \otimes u_iv_j \Big\|_{M_n(\mathcal B(H))}
\leq  \Big \|\sum_{ij} a_{ij} \otimes e_i\otimes e_j \Big\|_{M_n(\ell_1\otimes_{\max}\ell_1)},  \pl \]where the supremum runs over all families of commuting unitaries $(u_i)_i$, $(v_j)_j$ in $\mathcal B(H)$. While Ricard demonstrated \cite[Proposition 13.10]{Pi20} the existence of families of commuting contractions $(x_i)_i$, $(y_j)_j$ and (even scalar) coefficients $(a_{ij})_{i,j}\in \mathbb C$ satisfying 
 \[ \Big\|\sum_{i,j} a_{ij}\otimes x_i y_j \Big\|_{M_n(\mathcal B(H))}
 >   \Big \|\sum_{ij} a_{ij} \otimes e_i\otimes e_j \Big\|_{M_n(\ell_1\otimes_{\max}\ell_1)},  \pl \]Theorem \ref{thm2}  provides families of commuting contractions that dramatically violate this inequality - such that even an equivalence between the norms is not possible. Note that considering matrix coefficients $(a_{ij})_{i,j}$ is crucial here, since Grothendieck's inequality (\ref{GRO}) implies that 
 \[ \Big\|\sum_{i,j} a_{ij} x_i y_j \Big\|_{\mathcal B(H)}\leq K_G
  \Big \|\sum_{ij} a_{ij}  e_i\otimes e_j \Big\|_{\ell_1\otimes_{\max}\ell_1},  \pl \] for every families of (non-necessarily commuting) contractions $(x_i)_i$ and $(y_j)_j$. Hence, Theorem \ref{thm2} can be understood as a qualitative witness to the fact that the commuting contractions $(x_i)_i$ and $(y_j)_j$ cannot be normal. We are grateful to Mikael R$\o$rdam for reminding us of Fuglede's result.
 \end{remark}

\end{document}